\newtheorem{theorem}{Theorem}     
\newtheorem{lemma}{Lemma}
\newtheorem{corollary}{Corollary}
\newtheorem{definition}{Definition}
\theoremstyle{remark}
\newtheorem*{rem}{Remark}
\def\N{\mathbb{N}}
\def\R{\mathbb{R}}
\def\Z{\mathbb{Z}}
\def\P{\mathcal{P}}
\def\S{\mathfrak{S}}
\begin{document}
\title{Polynomial differences in the primes}
\author{Neil Lyall\quad\quad\quad Alex Rice}

\address{Department of Mathematics, The University of Georgia, Athens, GA 30602, USA}
\email{lyall@math.uga.edu}
\address{Department of Mathematics, The University of Georgia, Athens, GA 30602, USA}
\email{arice@math.uga.edu}

\subjclass[2000]{11P05, 11P32, 11P55}

\begin{abstract} We establish, utilizing the Hardy-Littlewood Circle Method, an asymptotic formula for the number of pairs of primes whose differences lie in the image of a fixed polynomial. 
We also include a generalization of this result where differences are replaced with any integer linear combination of two primes.
\end{abstract}\maketitle

\setlength{\parskip}{3pt}

\section{Introduction}

Given a natural number $N$, how many pairs of primes less than or equal to $N$ differ by a perfect square? 

More generally, for an arbitrary polynomial $f \in \Z[x]$, we define 
\begin{equation} \label{rfdef}  r_f(N)=\# \{ (p_1,p_2) \in \mathcal{P}_N^2 \,:\,p_1-p_2 \in f(\N) \},
\end{equation} 
where $\P$ denotes the primes and $\P_N = \P \cap \{1,\dots,N\}$. In this article we will first provide a heuristic, then argue rigorously with the Hardy-Littlewood Circle Method, for the following result.

\begin{theorem} \label{T1} If $f(x) = c_kx^k +\cdots+c_1x +c_0 \in \Z[x]$, $c_k>0$, and $k \geq 1$, then 
\begin{equation} \label{rf} 
r_f(N) = \prod_{p \in \mathcal{P}} \left(1+\frac{z_f(p)-1}{(p-1)^2}\right)\frac{1}{c_k^{1/k}}\frac{k}{k+1} \frac{N^{(k+1)/k}}{\log^2 N} + O\left(\frac{N^{(k+1)/k}\log \log N}{\log^3 N}\right),
 \end{equation} 
where $z_f(p)$ denotes the number of roots of $f$ modulo $p$, and the implied constant depends only on $f$.  \end{theorem}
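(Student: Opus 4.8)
The plan is to realize $r_f(N)$ as a single Fourier integral and attack it with the circle method. Writing $e(\theta)=e^{2\pi i\theta}$, I would set
$$S(\alpha)=\sum_{p\in\P_N}e(p\alpha),\qquad T(\alpha)=\sum_{1\le n\le M}e(f(n)\alpha),\qquad M=\lfloor(N/c_k)^{1/k}\rfloor,$$
so that $M$ is, up to $O(1)$, the number of $n$ with $0<f(n)\le N$. Since $\int_\T|S(\alpha)|^2e(-h\alpha)\,d\alpha$ counts pairs $(p_1,p_2)\in\P_N^2$ with $p_1-p_2=h$, summing over $n$ gives
$$\int_\T|S(\alpha)|^2\,\overline{T(\alpha)}\,d\alpha=\sum_{1\le n\le M}\#\{(p_1,p_2)\in\P_N^2: p_1-p_2=f(n)\}.$$
This counts each pair with multiplicity equal to its number of $f$-preimages; since $f$ is strictly monotone outside a bounded set, this differs from $r_f(N)$ by at most $O_f(1)$ difference-values, each contributing $\ll N/\log^2 N$, which is negligible against the claimed main term. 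I would then fix $Q=\log^A N$ (with $A=A(f)$ chosen at the end) and split $\T$ into major arcs $\mathfrak{M}$, the intervals $|\alpha-a/q|\le Q/N$ with $q\le Q$ and $(a,q)=1$, and the complementary minor arcs $\mathfrak{m}$.

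On the major arcs I would replace each sum by its expected model. By the prime number theorem in arithmetic progressions (Siegel--Walfisz, uniform for $q\le Q$), for $\alpha=a/q+\beta\in\mathfrak{M}$ one has $S(\alpha)\approx\frac{\mu(q)}{\varphi(q)}v(\beta)$ with $v(\beta)=\int_2^N(\log t)^{-1}e(t\beta)\,dt$, while $T(\alpha)\approx q^{-1}G_f(a,q)\,w(\beta)$ with the complete sum $G_f(a,q)=\sum_{r\bmod q}e(af(r)/q)$ and $w(\beta)=\int_0^M e(f(t)\beta)\,dt$. Multiplying out $|S|^2\overline{T}$, integrating in $\beta$, and summing over $a,q$ factors the major-arc contribution into an arithmetic factor $\sum_{q\le Q}\frac{\mu(q)^2}{q\,\varphi(q)^2}\sum_{(a,q)=1}G_f(a,q)$ and an archimedean factor. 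For $q=p$ prime, $\sum_{(a,p)=1}G_f(a,p)=p(z_f(p)-1)$, giving local factor $1+(z_f(p)-1)/(p-1)^2$; multiplicativity completes the arithmetic factor to $\prod_{p\in\P}\big(1+(z_f(p)-1)/(p-1)^2\big)$, which is where $z_f(p)$ enters. The archimedean factor evaluates, via $f(t)\approx c_kt^k$ and $\int_0^M(N-f(t))\,dt=\frac{k}{k+1}MN+o(MN)$ together with the two logarithms from $|v|^2$, to $\frac{1}{c_k^{1/k}}\frac{k}{k+1}\frac{N^{(k+1)/k}}{\log^2 N}$. The $\log\log N$ in the error surfaces precisely here: truncating the singular series at level $Q$ leaves partial sums $\sum_{q\le Q}\mu^2(q)/\varphi(q)\asymp\log Q\asymp\log\log N$.

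For the minor arcs I would bound
$$\int_{\mathfrak{m}}|S(\alpha)|^2|T(\alpha)|\,d\alpha\le\Big(\sup_{\alpha\in\mathfrak{m}}|T(\alpha)|\Big)\int_\T|S(\alpha)|^2\,d\alpha=\Big(\sup_{\mathfrak{m}}|T|\Big)\,\pi(N),$$
controlling $\sup_{\mathfrak{m}}|T|$ by Weyl's inequality: on $\mathfrak{m}$ every $\alpha$ admits a rational approximation with $Q<q\le M^k$, yielding $|T(\alpha)|\ll M\,(q^{-1}+qM^{-k})^{2^{1-k}}(\log M)^{O_k(1)}$. Since $\pi(N)\ll N/\log N$ and $M\asymp N^{1/k}$, taking $A$ large in terms of $k$ forces this contribution to be $\ll N^{(k+1)/k}\log^{-3-\delta}N$, comfortably inside the error term.

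The main obstacle is this minor-arc estimate. The textbook form of Weyl's inequality carries an $M^{\epsilon}=N^{\epsilon/k}$ factor, which is fatal when one only aims to save powers of $\log N$; the argument hinges on replacing it by an honest $(\log N)^{O_k(1)}$ loss—tracking the $(k-1)$-fold divisor function produced by Weyl differencing as a genuine logarithmic power rather than as $N^\epsilon$—and on treating uniformly the full minor-arc range of denominators, including $q$ near $M^k$ where the bound degenerates. A secondary point demanding care is the main-term bookkeeping: matching the completed arithmetic factor to the stated Euler product, and confirming that the averaged local root counts $z_f(p)$ assemble into $1+(z_f(p)-1)/(p-1)^2$ together with the archimedean constant $\frac{1}{c_k^{1/k}}\frac{k}{k+1}$.
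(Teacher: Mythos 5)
Your proposal follows the same circle-method skeleton as the paper but differs genuinely in execution at two points. First, you work with the unweighted prime sum $S(\alpha)=\sum_{p\in\P_N}e^{2\pi ip\alpha}$, whereas the paper works with the von Mangoldt weighted sum $\widehat{\Lambda_N}$ and then needs a separate un-weighting step (Lemma \ref{Wlem}) to recover $r_f(N)$; that step is in fact the sole source of the $\log\log N$ in the theorem's error term. Second, and more importantly, your minor-arc treatment is the transpose of the paper's: you put the supremum on the Weyl sum $T$ and use Parseval on $|S|^2$, while the paper puts the supremum on the prime sum (the Vinogradov--Vaughan bound of Lemma \ref{Vmin}, which comes with explicit $\log^4N$ losses) and controls the Weyl sum only in mean, via H\"older and the Hua/Vinogradov-type estimate $\int_0^1|S_M|^s\,d\alpha=O(M^{s-k})$ of Lemma \ref{Wsum}. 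The paper's arrangement is what allows it to quote off-the-shelf results; yours forces you to prove a log-sharpened Weyl inequality, and that is where your proposal has a genuine gap.

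The bound you assert, $|T(\alpha)|\ll M(q^{-1}+qM^{-k})^{2^{1-k}}(\log M)^{O_k(1)}$, justified by ``tracking the $(k-1)$-fold divisor function \dots as a genuine logarithmic power rather than as $N^\epsilon$,'' is not correct as described: $d_{k-1}(h)$ is \emph{not} pointwise bounded by any power of $\log h$ (it reaches size $\exp(c\log h/\log\log h)$), so one cannot simply ``track it as a log power'' through Weyl differencing. What is true is that its \emph{moments} are log-power bounded, so after differencing one must decouple it from the exponential-sum factor, e.g.\ by Cauchy--Schwarz:
\begin{equation*}
\sum_{0<|m|\le CM^{k-1}}d_{k-1}(|m|)\min\left(M,\|m\alpha\|^{-1}\right)\le\Bigl(\sum_{m}d_{k-1}(m)^2\Bigr)^{1/2}\Bigl(\sum_{m}\min\left(M,\|m\alpha\|^{-1}\right)^2\Bigr)^{1/2},
\end{equation*}
using $\sum_{m\le H}d_{k-1}(m)^2\ll H\log^{O_k(1)}H$ and $\min(M,x)^2\le M\min(M,x)$. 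This repair weakens the exponent from $2^{1-k}$ to $2^{-k}$, which is harmless for your purpose (you still save a fixed power of $Q=\log^AN$ on the minor arcs, and then choose $A$ large), but without some such averaging argument your key minor-arc lemma is unproved and the step fails.

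Separately, your error bookkeeping for the main term is wrong, though not fatally. The singular series converges \emph{absolutely}: with the Gauss-sum bound $G_f(a,q)\ll_\epsilon q^{1-1/k+\epsilon}$ (which the paper uses), the $q$-th summand is $O(q^{-1-1/4k})$, so truncation at $Q$ costs $O(\log^{-A/4k}N)$ --- the partial sums $\sum_{q\le Q}\mu^2(q)/\phi(q)\asymp\log Q$ never enter; if they did (i.e.\ if there were no cancellation over $a$), the series would diverge and there would be no main term at all. In your unweighted setup the genuinely delicate point is instead the archimedean factor: replacing $v(\beta)=\int_2^N e^{2\pi it\beta}(\log t)^{-1}dt$ by $(\log N)^{-1}\int_0^Ne^{2\pi it\beta}dt$ is not a cost-free ``two logarithms'' step, since the discrepancy has relative size $1/\log N$ near $\beta=0$; one must either retain the logarithmic densities and evaluate $\int_0^M\int(\log y\,\log(y-f(t)))^{-1}dy\,dt$ via Parseval, or bound the replacement error with care. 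Done properly this yields an error $O(N^{(k+1)/k}/\log^3N)$ --- actually slightly better than the stated theorem, precisely because your approach avoids the paper's un-weighting lemma --- but as written your main-term evaluation glosses over the very computation that determines the error term, while attributing the error to a step that contributes essentially nothing.
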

 
We note that  due to the sufficient decay of $(z_f(p)-1)/(p-1)^2$, the product in the leading term is always finite, and is only 0 if one of the individual terms is 0. This only occurs if $z_f(2)=0$, yielding a meaningful result as long as $f$ has a root modulo $2$. Also, if $f(x)=cx^k$, then the product collapses to $\prod_{p \mid c} \frac{p}{p-1}$, including its complete disappearance if $f(x)=x^k$. 

\begin{rem} The condition that $f$ has a root modulo $2$ is one piece of the more stringent condition that $f$ is an ``intersective polynomial", i.e. $f$ has a root modulo $n$ for all $n \in \N$. This additional assumption on $f$ is necessary and sufficient to conclude the same order of growth for the count analogous to $r_f(N)$ for any subset of the primes of positive relative upper density, shown in a recent result of Th\'ai Ho\`ang L\'e  \cite{Le}. 
\end{rem}

While there is an extensive literature devoted to questions of this type going back some 60 or 70 years, namely Goldbach type problems for polynomials (see for example \cite{Ha}, \cite{Zu}, \cite{Pe} and \cite{BKW}), it appears that the precise result stated in Theorem \ref{T1} above has not been considered previously. 

\section{Heuristics}
We begin by addressing the original motivating question, temporarily letting $r(N)=r_{x^2}(N)$. If we consider the discrete derivative $r'(N) = r(N)-r(N-1)$, we see that $r'(N)=0$ unless $N \in \P$, which by the Prime Number Theorem occurs with probability $1/\log N$. 
If $N$ does happen to be prime, then it induces $|\P_{N-1}| \sim N/\log N$ new positive differences of primes, which are each perfect squares with probability $1/\sqrt{N}$. 

Therefore, we may well expect $r'(N)$ to grow on average like $\sqrt{N}/\log^2N$, which in turns leads, via a simple summation by parts argument, to the prediction that 
\begin{equation}
r(N) = \sum_{k=1}^N r'(k) = \frac{2}{3}\frac{N^{3/2}}{\log^2N} + \text{error}.\end{equation}
This heuristic immediately generalizes to predict 
\begin{equation}\label{xk}
r_{x^k}(N) = \frac{k}{k+1}\frac{N^{(k+1)/k}}{\log^2 N} + \text{error},
\end{equation}
but it fails to account for potential congruence biases. 

In the spirit of generalization, we fix $f(x)= c_kx^k +\cdots+c_1x +c_0 \in \Z[x]$. Note that $r_f(N)=r_{-f}(N)$, so we can assume without loss of generality that $c_k>0$. We must consider how $r_f(N)$ should compare to the prediction (\ref{xk}) above. One difference is that the leading coefficient $c_k$ causes the number of points in the image of $f$ that are at most $N$ to be reduced asymptotically by a factor of $c_k^{-1/k}$. We must also consider local congruence biases of both the primes and the polynomial.
For example, if we took $f(x)=2x^k+1$, then the second prime in each pair would have to be 2, $r_f(N)$ would clearly then be bounded by $|\P_N| \sim N/\log N$, and we couldn't possibly obtain the same order of growth as predicted in (\ref{xk}). The reason for this collapse is of course that almost all primes are odd, so almost no differences of primes are odd. More generally, we can use knowledge about the distribution of primes to gain knowledge about the distribution of differences of primes, and use these facts to investigate which polynomials these differences favor and avoid.

In order to exhaust all possible congruence biases, we must consider the distribution of differences of primes and the image of our polynomial modulo $n$ for each $n \in \N$. However, if two moduli $n$ and $m$ are coprime, then congruence modulo $n$ and congruence modulo $m$ are independent events (by the Chinese Remainder Theorem). Therefore, we can determine all bias by investigating modulo arbitrarily large powers of each prime. Toward this end, we consider an arbitrary prime $p$, and we proceed probabilistically. We note that if two sets $A$ and $B$ are uniformly distributed modulo $p^n$, and we randomly select $a \in A$ and $b \in B$, then the probability that $a \equiv b$ modulo $p^n$, which we denote by $\textbf{P}(a \equiv b$  mod $p^n)$, is $1/p^n$. We compare the analogous probability for our specific sets with this expectation by defining a ``bias factor" 
\begin{equation}
b_f(p^n)= \frac{\textbf{P}(p_1-p_2 \equiv f(d) \text{ mod }p^n)}{1/p^n}\end{equation}
and further noting that
\begin{align*}
b_f(p)&= p^n\sum_{a=0}^{p^n-1} \textbf{P}(p_1-p_2 \equiv a \text{ mod } p^n)\textbf{P}(f(d) \equiv a \text{ mod } p^n)\\
&=\sum_{a=0}^{p^n-1} \textbf{P}(p_1-p_2 \equiv a \text{ mod } p^n) (\# \text{ solutions to } f(x)=a \text{ in } \Z / p^n\Z).
\end{align*}  


Once we determine these biases, we can make a prediction of the form 
\begin{equation}\label{first}
r_f(N)=\prod_{p \in \mathcal{P}} \limsup_{n \to \infty} b_f(p^n)\, \frac{1}{c_k^{1/k}} \frac{k}{k+1}\frac{N^{(k+1)/k}}{\log^2 N} + \text{error}.
\end{equation}
This formulation turns out to be unnecessarily frightening. The Prime Number Theorem for Arithmetic Progressions due to Siegel and Walfisz states that for any modulus $m$, the primes are evenly distributed among the congruence classes coprime to $m$. Therefore, the only biases of the primes, and hence the only biases of differences of primes, are related the coprimality. However, we know that $(a,p)=1$ if and only if $(a,p^n)=1$ for all $n$, which means that $\textbf{P}(p_1-p_2 \equiv a \text{ mod } p^n)=\textbf{P}(p_1-p_2 \equiv b \text{ mod } p^n)$ if $a \equiv b$ mod $p$. One can easily show from the definition above that this implies $b_f(p^n)=b_f(p)$ for all $n$, greatly simplifying prediction (\ref{first}) to 
\begin{equation}
r_f(N)=\prod_{p \in \mathcal{P}} b_f(p) \frac{1}{c_k^{1/k}}\frac{k}{k+1}\frac{N^{(k+1)/k}}{\log^2 N} + \text{error}.
\end{equation}

 Now, for each fixed prime $p$, it follows from the Siegel-Walfisz Theorem that primes are congruent to $1,2,\dots,p-1$ modulo $p$ each with probability $1/(p-1)$ and consequently that the difference of two randomly selected primes is congruent to $0$ modulo $p$ with probability $1/(p-1)$, and congruent to $1,2,\dots,p-1$ modulo $p$ each with probability $(p-2)/(p-1)^2$. 
From this observation it follows that the bias factor $b_f(p)$ is completely determined by the number of roots of $f$ modulo $p$, a quantity we shall denote by $z_f(p)$.  In fact, 
\begin{equation}
b_f(p) = \frac{1}{p-1} z_f(p) + \frac{p-2}{(p-1)^2}\left(p-z_f(p)\right) = 1 +\frac{z_f(p)-1}{(p-1)^2}.
\end{equation}

This completes the heuristic for the result in Theorem \ref{T1}, and indicates that the only ``fatal" obstruction toward the expected order of growth for $r_f(N)$ is the modulo $2$ consideration noted above.

\section{Rigorous Treatment via the Hardy-Littlewood Circle Method}

In this section we give a proof of Theorem \ref{T1} using the circle method developed by Hardy and Littlewood. As is standard, we begin by weighting the characteristic function of the primes with a logarithm to obtain a more uniform distribution. This yields a weighted count intimately related to $r_f(N)$, which we define below using a truncated von Mangoldt function.

\begin{definition} We define $\Lambda_N : \Z \to [0,\infty)$ by 
\begin{equation}
\Lambda_N(n) = \begin{cases} \log p &\text{if $n=p^k \leq N, p \in \P, k \in \N$}\\ 0 &\text{else} \end{cases},
\end{equation}
and for $f \in \Z[x]$ as in Theorem 1, we define 
\begin{equation}
R_f(N) = \sum_{d=1}^M \sum_{n \in \Z} \Lambda_N(n)\Lambda_N(n-f(d)),
\end{equation}
where $M=\left(N/c_k\right)^{1/k}$.
\end{definition}
The main result of this section, and indeed the whole article, is the following.

\begin{theorem}\label{T2} For $f \in \Z[x]$ as in Theorem 1 and any $A>0$, we have 
\begin{equation}
R_f(N) = \mathfrak{S}(f) \frac{1}{c_k^{1/k}} \frac{k}{k+1} N^{(k+1)/k} + O\left(\frac{N^{(k+1)/k}}{\log^A N}\right),
\end{equation}
where the implied constant depends only on $f$ and $A$, and
\begin{equation}
\S(f) = \sum_{q=1}^{\infty} \frac{\mu(q)^2}{q\phi(q)^2}\sum_{\substack{0 \leq a < q \\ (a,q)=1}} \sum_{r=0}^{q-1} e^{2\pi \text{i} f(r) a/q},
\end{equation} 
where $\mu$ is the M\"obius function, $\phi$ is the Euler totient function.
\end{theorem}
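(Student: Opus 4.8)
The plan is to apply the Hardy–Littlewood circle method in its standard form. Writing $e(\theta)=e^{2\pi i\theta}$, I introduce the two exponential sums
\[
S(\alpha)=\sum_{n}\Lambda_N(n)e(n\alpha),\qquad T(\alpha)=\sum_{d=1}^{M}e(f(d)\alpha),
\]
and observe, by orthogonality of the characters $e(n\alpha)$ on $[0,1]$, that
\[
R_f(N)=\int_0^1|S(\alpha)|^2\,\overline{T(\alpha)}\,d\alpha .
\]
Indeed, expanding $|S(\alpha)|^2$ and integrating term by term isolates, for each $d$, precisely the pairs $n,\,n-f(d)$ appearing in the definition of $R_f(N)$. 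I then fix a large parameter $B=B(A)$, set $P=(\log N)^B$, and dissect $[0,1]$ into major arcs $\mathfrak{M}$ (short intervals about each fraction $a/q$ with $q\le P$ and $(a,q)=1$) and a minor-arc remainder $\mathfrak{m}$.

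On $\mathfrak{m}$ I would bound the contribution by $\sup_{\alpha\in\mathfrak{m}}|T(\alpha)|\cdot\int_0^1|S(\alpha)|^2\,d\alpha$. The second factor is controlled by Parseval, $\int_0^1|S(\alpha)|^2\,d\alpha=\sum_n\Lambda_N(n)^2\ll N\log N$. For the first factor I invoke Weyl's inequality for the degree-$k$ polynomial sum $T$ (or, when $k=1$, the elementary estimate for a geometric sum): using Dirichlet's theorem to approximate any $\alpha\in\mathfrak{m}$ by a fraction with denominator in the minor-arc range, Weyl's inequality yields a power saving $\sup_{\alpha\in\mathfrak{m}}|T(\alpha)|\ll M^{1-\sigma+\epsilon}$ with $\sigma=2^{1-k}$. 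Since the target main term has order $N^{(k+1)/k}\asymp NM$ and we only need a logarithmic saving off it, this power saving comfortably absorbs the minor arcs into the error $O(N^{(k+1)/k}/\log^A N)$.

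The heart of the argument is $\mathfrak{M}$. On the arc about $a/q$ I replace $S$ and $T$ by their expected approximations: the Siegel–Walfisz theorem gives $S(a/q+\beta)=\tfrac{\mu(q)}{\phi(q)}u(\beta)+O\!\big(N\exp(-c\sqrt{\log N})\big)$ with $u(\beta)=\sum_{m\le N}e(m\beta)$, while splitting the sum defining $T$ into residue classes modulo $q$ gives $T(a/q+\beta)\approx \tfrac{1}{q}\big(\sum_{r=0}^{q-1}e(f(r)a/q)\big)\int_0^M e(f(t)\beta)\,dt$. Substituting these and summing over the admissible $a$ and over $q\le P$ factors the main term as an arithmetic part times an archimedean part. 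The arithmetic part is exactly the truncation $\sum_{q\le P}\frac{\mu(q)^2}{q\phi(q)^2}\sum_{(a,q)=1}\sum_{r=0}^{q-1}e(f(r)a/q)$ of $\S(f)$ (the conjugate on $T$ being harmless, since $a\mapsto -a$ permutes the reduced residues mod $q$), and the archimedean part is the singular integral $\int_{-1/2}^{1/2}|u(\beta)|^2\,\overline{\int_0^M e(f(t)\beta)\,dt}\,d\beta$, which a direct computation (reproducing the continuous count $\int_0^M(N-f(t))\,dt$) evaluates to $\tfrac{1}{c_k^{1/k}}\tfrac{k}{k+1}N^{(k+1)/k}$.

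Two points demand care, and I expect the bookkeeping here to be the main obstacle. First, I must show the tail $\sum_{q>P}$ of $\S(f)$ is negligible and that $\S(f)$ converges; for this I would check that the summand is multiplicative in $q$ via the Chinese Remainder Theorem, compute the local factor at a prime $p$ to equal $\frac{z_f(p)-1}{(p-1)^2}$ (so that $\S(f)=\prod_{p}\big(1+\frac{z_f(p)-1}{(p-1)^2}\big)$, reconciling Theorem \ref{T2} with Theorem \ref{T1}), and use $z_f(p)\le k$ to obtain absolute convergence and a rapidly decaying tail. Second, I must propagate the Siegel–Walfisz and integral-approximation errors through the double sum over the arcs and verify the accumulated error stays within $O(N^{(k+1)/k}/\log^A N)$; this is where $B$ is pinned down in terms of $A$, where the major-arc widths are balanced so that extending the singular integral to all of $\R$ is negligible, and where the prime-power terms of $\Lambda_N$ (contributing $O(\sqrt{N}\log N)$ to $S$) are discarded as lower order.
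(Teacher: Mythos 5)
Your reduction of $R_f(N)$ to $\int_0^1 |S(\alpha)|^2\,\overline{T(\alpha)}\,d\alpha$, your major-arc analysis (Siegel--Walfisz for $S$, residue-class splitting for $T$, factorization into the truncated singular series times a singular integral evaluating to $\tfrac{k}{k+1}c_k^{-1/k}N^{(k+1)/k}$), and your treatment of the singular series via multiplicativity and the local factors $\frac{z_f(p)-1}{(p-1)^2}$ are all sound and essentially identical to the paper's. The genuine gap is the minor arcs, and it is not a matter of bookkeeping: the claimed bound $\sup_{\alpha\in\mathfrak{m}}|T(\alpha)|\ll M^{1-\sigma+\epsilon}$ is false. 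Because Siegel--Walfisz forces the major arcs to have denominators $q\le \log^B N$ and widths $\log^B N/N$, the minor arcs contain points hugging the major arcs. For instance $\alpha_0=2\log^B N/N\in\mathfrak{m}$, and comparing $T(\alpha_0)$ with $\int_0^M e^{2\pi i f(t)\alpha_0}\,dt$ (the difference is $O(M^k\alpha_0)=O(\log^B N)$) and evaluating that integral by stationary phase gives $|T(\alpha_0)|\asymp M\left(\log^B N\right)^{-1/k}$. So $\sup_{\mathfrak{m}}|T|$ is genuinely of size $M$ divided by a power of $\log N$; no power saving exists. Worse, Weyl's inequality cannot even recover this logarithmic saving on all of $\mathfrak{m}$: Dirichlet only guarantees an approximation $a/q$ with $\log^B N\le q\le N/\log^B N$, and when $q$ (or $N/q$) is merely polylogarithmic the Weyl bound $M^{1+\epsilon}\bigl(q^{-1}+M^{-1}+qM^{-k}\bigr)^{2^{1-k}}\gg M^{1+\epsilon}\log^{-B2^{1-k}}N$ exceeds the trivial bound $M$, since the $M^{\epsilon}$ divisor-function loss swamps any power of $\log N$.

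The step can be repaired, but it needs more than Weyl plus Parseval. One repair keeps your pointwise strategy: approximate $\alpha$ by $a/q$ with $q\le M^{k-1/2}$; for $M^{1/2}<q\le M^{k-1/2}$ Weyl does give a power saving $\ll M^{1-2^{-k}+\epsilon}$, while for $q\le M^{1/2}$ one uses the major-arc approximation of $T$ together with the Gauss-sum bound $|S_{a/q}|\ll q^{1-1/k+\epsilon}$ and the oscillatory decay $\bigl|\int_0^M e^{2\pi i f(t)\beta}\,dt\bigr|\ll M(1+N|\beta|)^{-1/k}$; on $\mathfrak{m}$ either $q>\log^B N$ or $N|\beta|\ge \log^B N$, so this yields $\sup_{\mathfrak{m}}|T|\ll M\log^{-cB}N$ for some $c=c(k)>0$, and then your bound $\int_0^1|S|^2\ll N\log N$ makes the minor-arc contribution admissible once $B$ is large in terms of $A$ and $k$. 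The paper's route is different and avoids pointwise bounds on $T$ entirely: it applies Vinogradov's minor-arc estimate to the \emph{prime} sum, giving $\sup_{\mathfrak{m}}|S|\ll N\log^{-(B-8)/2}N$ (an estimate designed precisely for the regime $\log^B N\le q\le N/\log^B N$), and controls $T$ in mean via H\"older together with $\int_0^1|T(\alpha)|^s\,d\alpha\ll M^{s-k}$, so the large values of $T$ near rationals with small denominators are integrated out rather than bounded pointwise. As written, however, your minor-arc step fails, and the error term in Theorem \ref{T2} is not established.
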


It is easy to see that Theorem \ref{T1} will follow from Theorem \ref{T2} and the following two lemmas.

\begin{lemma}\label{SSLem} If $f \in \Z[x]$ is as in Theorem \ref{T1} and $z_f(p)$ denotes the number of roots of $f$ modulo $p$, then
\begin{equation}
\sum_{q=1}^{\infty} \frac{\mu(q)^2}{q\phi(q)^2}\sum_{\substack{0 \leq a < q \\ (a,q)=1}} \sum_{r=0}^{q-1} e^{2\pi \text{i} f(r) a/q} =  \prod_{p \in \mathcal{P}} \left(1+\frac{z_f(p)-1}{(p-1)^2}\right).
\end{equation}
\end{lemma}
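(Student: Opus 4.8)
The plan is to recognize the left-hand side as an absolutely convergent sum of a multiplicative function supported on squarefree integers, and to evaluate it as an Euler product whose local factors are computed by a single orthogonality relation. Writing $T(q) = \sum_{\substack{0 \le a < q \\ (a,q)=1}} \sum_{r=0}^{q-1} e^{2\pi i f(r) a/q}$ for the inner double sum, the series becomes $\sum_{q} \frac{\mu(q)^2}{q\phi(q)^2} T(q)$, and the factor $\mu(q)^2$ restricts it to squarefree $q$. Since $q \mapsto 1/(q\phi(q)^2)$ is multiplicative, it suffices to show that $T$ is multiplicative on coprime arguments; then the summand $g(q) := \mu(q)^2 T(q)/(q\phi(q)^2)$ is multiplicative and supported on squarefree integers, so (granting absolute convergence) the sum factors as $\prod_p (1 + g(p))$.

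First I would establish the multiplicativity of $T$. Given coprime $q_1, q_2$ with $q = q_1 q_2$, the Chinese Remainder Theorem lets me write $a/q \equiv b_1/q_1 + b_2/q_2 \pmod 1$, where as $a$ runs over residues coprime to $q$ the pair $(b_1, b_2)$ runs over residues coprime to $q_1$ and to $q_2$ respectively; simultaneously $r \bmod q$ corresponds to a pair $(r_1, r_2)$ with $r_j \equiv r \pmod{q_j}$. Since $f$ has integer coefficients, $f(r) \equiv f(r_j) \pmod{q_j}$, so the exponential splits as $e^{2\pi i f(r_1) b_1/q_1} \, e^{2\pi i f(r_2) b_2/q_2}$, and summing over the two independent pairs yields $T(q) = T(q_1) T(q_2)$.

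Next I would compute the local factor. For a prime $p$ the elementary identity $\sum_{a=1}^{p-1} e^{2\pi i m a/p}$ equals $p-1$ if $p \mid m$ and $-1$ otherwise; applying this with $m = f(r)$ and summing over $r$, and recalling that exactly $z_f(p)$ of the residues $r$ satisfy $p \mid f(r)$, gives $T(p) = (p-1)z_f(p) - (p - z_f(p)) = p(z_f(p) - 1)$. Dividing by $p\phi(p)^2 = p(p-1)^2$ produces $g(p) = (z_f(p) - 1)/(p-1)^2$, so $1 + g(p)$ is precisely the asserted local factor, and since $g(p^n)=0$ for $n \ge 2$ the full local Euler factor is just $1 + g(p)$.

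The only genuine issue is justifying the absolute convergence that legitimizes the factorization. Because $f$ has degree $k$, it has at most $k$ roots modulo $p$, so $|g(p)| = |z_f(p) - 1|/(p-1)^2 \le k/(p-1)^2$, which is summable; the same bound shows $\prod_p (1 + k/(p-1)^2)$ converges, giving absolute convergence of $\sum_q |g(q)|$ over squarefree $q$. With this in hand the rearrangement into $\prod_p (1 + g(p)) = \prod_p \bigl(1 + (z_f(p)-1)/(p-1)^2\bigr)$ is valid, completing the proof. I do not expect any step to be a serious obstacle; the CRT bookkeeping in the multiplicativity argument is the most error-prone part, but it is entirely standard.
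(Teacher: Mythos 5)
Your proof is correct, and its skeleton matches the paper's: both recognize the summand as a multiplicative function supported on squarefree integers, factor the series into an Euler product $\prod_p(1+g(p))$, and evaluate the local factor via the orthogonality identity $\sum_{a=1}^{p-1}e^{2\pi i m a/p}\in\{p-1,-1\}$, arriving at $T(p)=z_f(p)(p-1)-(p-z_f(p))=p(z_f(p)-1)$ exactly as the paper does. You diverge in how the two supporting facts are justified, and in both cases your route is more self-contained. For multiplicativity of $T$, the paper simply cites Lemma 5.1 of Davenport, whereas you carry out the Chinese Remainder Theorem bookkeeping explicitly (correctly, including the point that $f(r)\equiv f(r_j)\pmod{q_j}$ because $f$ has integer coefficients). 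More substantively, for absolute convergence the paper reuses the Gauss sum estimate $S_{a/q}=O_\epsilon(q^{1-1/k+\epsilon})$ (Theorem 7.1 of Vaughan, a genuinely nontrivial exponential-sum bound already needed in the proof of Theorem \ref{T2}) to get $F(q)=O(q^{-1-1/4k})$ termwise; you instead exploit multiplicativity itself, bounding $|g(p)|\le k/(p-1)^2$ by Lagrange's theorem on roots of polynomials mod $p$ and dominating $\sum_q|g(q)|$ by the convergent product $\prod_p\bigl(1+k/(p-1)^2\bigr)$. Your convergence argument is elementary and purely local, which is aesthetically preferable for a standalone proof of the lemma; the paper's choice buys economy within the article, since the Gauss sum bound is needed anyway to truncate the singular series in establishing (\ref{ss}), and a termwise polynomial-decay bound (unlike yours) is what that truncation step actually requires.
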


\begin{lemma} \label{Wlem} If $f \in \Z[x]$ is as in Theorem \ref{T1}, then
\begin{equation}
r_f(N) = \frac{R_f(N)}{\log^2 N} +  O\left(\frac{N^{(k+1)/k} \log \log N}{\log^3 N}\right).
\end{equation}
\end{lemma}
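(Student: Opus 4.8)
The plan is to pass from the $\Lambda_N$-weighted quantity $R_f(N)$ to the combinatorial count $r_f(N)$ in three stages, absorbing the discrepancy at each stage into the stated error. First I would discard the proper prime powers. Writing $\Lambda_N=\Lambda_N'+\Lambda_N''$ with $\Lambda_N'$ supported on the primes and $\Lambda_N''$ on the higher prime powers, every term of $R_f(N)$ involving a factor of $\Lambda_N''$ is controlled by $\sum_n\Lambda_N''(n)\ll N^{1/2}$ against a complementary factor $\le\log N$; since the outer sum has length $M=(N/c_k)^{1/k}\asymp N^{1/k}$, these terms total $O(N^{1/2+1/k}\log N)$, which is negligible. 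Thus, up to this error,
\begin{equation*}
R_f(N)=\sum_{d=1}^{M}\ \sum_{\substack{p_1,p_2\in\P_N\\ p_1-p_2=f(d)}}\log p_1\log p_2 .
\end{equation*}

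The heart of the matter is the removal of the logarithmic weights, and I expect this to be the main obstacle. Since $p_1,p_2\le N$ we have $0\le \log^2 N-\log p_1\log p_2\le \log N\bigl(\log(N/p_1)+\log(N/p_2)\bigr)$, so replacing each weight $\log p_1\log p_2$ by $\log^2 N$ costs at most $\log N$ times $\sum_{d}\sum_{p_1-p_2=f(d)}\log(N/p_2)$ and its symmetric partner. For a fixed $d$ with $f(d)>0$, a standard upper-bound sieve yields $\#\{p_2\le t:\ p_2+f(d)\in\P\}\ll (t/\log^2 t)\prod_{p\mid f(d)}(1+1/p)$, and since $|f(d)|\ll N$ the product is $\ll\log\log N$ uniformly in $d$. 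Feeding this through partial summation via $\log(N/p_2)=\int_{p_2}^{N}dt/t$ gives $\sum_{p_1-p_2=f(d)}\log(N/p_2)\ll N\log\log N/\log^2 N$ for each such $d$; summing over the $\asymp N^{1/k}$ values of $d$ produces $O\!\left(N^{(k+1)/k}\log\log N/\log^2 N\right)$. After multiplying by $\log N$ and dividing the whole identity by $\log^2 N$, this is exactly the claimed error, giving
\begin{equation*}
\frac{R_f(N)}{\log^2 N}=\sum_{d=1}^{M}\#\{(p_1,p_2)\in\P_N^2:\ p_1-p_2=f(d)\}+O\!\left(\frac{N^{(k+1)/k}\log\log N}{\log^3 N}\right),
\end{equation*}
the finitely many $d$ with $f(d)\le 0$ being harmlessly absorbed, as each contributes $O(N\log N)$ to $R_f(N)$. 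It is precisely the uniform estimate $\prod_{p\mid f(d)}(1+1/p)\ll\log\log N$, invoked to avoid averaging the singular series over $d$, that forces the factor $\log\log N$ into the final error.

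It remains to compare the count $\sum_{d\le M}\#\{(p_1,p_2)\in\P_N^2: p_1-p_2=f(d)\}$ with $r_f(N)$; I claim they differ by $O(N/\log N)$, which lies inside the error term because $(k+1)/k>1$. There are three bounded sources of discrepancy. A pair with $p_1-p_2=m$ is counted once in $r_f(N)$ but with multiplicity $\#\{d\le M: f(d)=m\}$ in the sum; as $f$ has degree $k$ and is eventually strictly increasing, only $O(1)$ values $m$ are attained more than once, and each admits $\le\pi(N)\ll N/\log N$ prime pairs. Next, because $f(M)=N+O(N^{(k-1)/k})$, the symmetric difference of $\{1,\dots,M\}$ and $\{d:0<f(d)\le N\}$ has $O(1)$ elements, each again contributing $\ll N/\log N$. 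Finally, the $O(1)$ values of $d$ with $f(d)\le 0$, together with the pairs $p_1\le p_2$ they account for in $r_f(N)$, contribute $\ll N/\log N$. Combining the three stages yields the lemma.
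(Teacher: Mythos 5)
Your proof is correct, but its central step is genuinely different from the paper's. The bookkeeping stages coincide: both proofs discard proper prime powers (same $O(N^{1/2+1/k}\log N)$-type error), and both compare $r_f(N)$ with the multiplicity-weighted count $\sum_{d\le M}\#\{(p_1,p_2)\in\P_N^2: p_1-p_2=f(d)\}$ at a cost of $O(N/\log N)$ plus a small error from adjusting the range of $d$ (the paper is content with $M'=M+O_\epsilon(N^\epsilon)$ where you prove the sharper $O(1)$ discrepancy; both suffice). The divergence is in how the logarithmic weights are removed. The paper never touches sieve theory: it splits the $n$-sum at $N^{1-\delta}$, bounds the contribution of $n<N^{1-\delta}$ trivially by $MN^{1-\delta}$, notes that for $n\ge N^{1-\delta}$ the weights are at least $(1-\delta)^2\log^2 N$, and then chooses $\delta=2\log\log N/\log N$; crucially, to convert the multiplicative loss $(1-\delta)^{-2}=1+O(\log\log N/\log N)$ into the stated additive error it invokes Theorem \ref{T2} itself for the a priori bound $R_f(N)=O(N^{(k+1)/k})$, so the paper's Lemma \ref{Wlem} is proved \emph{using} Theorem \ref{T2} (which is harmless, since Theorem \ref{T1} is deduced from both together). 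You instead bound $\log^2N-\log p_1\log p_2\le\log N\bigl(\log(N/p_1)+\log(N/p_2)\bigr)$ and control the resulting sums by a Brun/Selberg upper-bound sieve for prime pairs with fixed difference, uniform in the shift, with partial summation; here the $\log\log N$ arises from the worst-case singular-series bound $\prod_{p\mid f(d)}(1+1/p)\ll\log\log N$ rather than from a choice of $\delta$. The trade-off: your argument is independent of Theorem \ref{T2} and explains the $\log\log N$ loss arithmetically, but it imports a classical yet nontrivial sieve estimate that the paper's self-contained, purely elementary (given Theorem \ref{T2}) argument never needs. Both routes are valid and yield the identical error term.
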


The proof of both Lemma \ref{SSLem} and Lemma \ref{Wlem} are standard exercises, for completeness however we include their proofs in Section \ref{4} below. 
In the remainder of this section we present the proof of Theorem \ref{T2}.


\subsection{Proof of Theorem \ref{T2}}

We fix $f$ as in Theorem \ref{T1} and $A>0$.

In order to apply the circle method, we need to reformulate the weighted count $R_f(N)$ on the transform side as an integral over the circle. Specifically, it follows from the usual orthogonality relation 
\begin{equation}
\int_0^1 e^{2\pi in\alpha}\,d\alpha= \begin{cases}1 &\text{if $n=0$}\\ 0 &\text{if $n\in\Z\setminus\{0\}$} \end{cases}
\end{equation}
that 
\begin{equation}\label{trans} 
R_f(N) = \int_0^1 |\widehat{\Lambda_N}(\alpha)|^2 S_M(\alpha)\, d\alpha, \end{equation} 
where  
\begin{equation}
\widehat{\Lambda_N}(\alpha) = \sum_{n \in \Z} \Lambda_N(n) e^{-2\pi in\alpha}
\end{equation}
and
\begin{equation}
S_M(\alpha)=\sum_{d=1}^M e^{2\pi if(d)\alpha}.
\end{equation}

As usual, we wish to partition the circle into two pieces, one the collection of points near rationals with small denominator, from which the primary contribution to the integral (\ref{trans}) will stem, and the other simply the complement. In order to appropriately make this partition official, we need a parameter yielded from the following estimate on high moments of the Weyl sum $S_M$.
\begin{lemma} \label{Wsum} There exists $s_0(k)=O(k^2\log k)$ such that if $s \geq s_0(k)$, then 
\begin{equation}
\int_0^1 |S_M(\alpha)|^s\,d\alpha = O(M^{s-k}).
\end{equation}
\end{lemma}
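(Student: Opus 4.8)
\subsection*{Proof proposal for Lemma \ref{Wsum}}

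The plan is to read the mean value as a Diophantine counting function and reduce it, by Vinogradov's mean value theorem, to the count associated with the \emph{complete} system of power-sum equations of degrees $1$ through $k$. Throughout write $e(\theta)=e^{2\pi i\theta}$. It suffices to treat even moments $s=2t$ with $t\in\N$: for general real $s\geq s_0(k)$ one takes the largest even integer $s'\leq s$ (which still satisfies $s'\geq s_0(k)$ after enlarging $s_0(k)$ by a bounded amount, leaving $s_0(k)=O(k^2\log k)$ intact) and estimates, using the trivial bound $\sup_\alpha|S_M(\alpha)|\leq M$,
\[
\int_0^1 |S_M(\alpha)|^{s}\,d\alpha \leq \big(\sup_{\alpha}|S_M(\alpha)|\big)^{s-s'}\int_0^1|S_M(\alpha)|^{s'}\,d\alpha \leq M^{s-s'}\cdot O(M^{s'-k})=O(M^{s-k}),
\]
so only the even case carries content.

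For $s=2t$, orthogonality turns the moment into a single-equation count,
\[
\int_0^1 |S_M(\alpha)|^{2t}\,d\alpha = \#\Big\{(x_1,\dots,x_t,y_1,\dots,y_t)\in\{1,\dots,M\}^{2t} : \sum_{i=1}^t f(x_i)=\sum_{i=1}^t f(y_i)\Big\}.
\]
The key step is to dominate this by the Vinogradov integral $J_{t,k}(M)=\int_{[0,1]^k}|g|^{2t}$, where $g(\beta_1,\dots,\beta_k)=\sum_{x\leq M}e(\beta_1x+\cdots+\beta_kx^k)$ counts solutions of the full system $\sum_i(x_i^j-y_i^j)=0$ for $1\leq j\leq k$. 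I would fix the quantities $h_j=\sum_i(x_i^j-y_i^j)$ for $1\leq j\leq k-1$; since $c_k\neq 0$, the equation $\sum_i f(x_i)=\sum_i f(y_i)$ then pins $\sum_i(x_i^k-y_i^k)$ to a single value $h_k=h_k(h_1,\dots,h_{k-1})$. For each fixed tuple $\mathbf h=(h_1,\dots,h_{k-1})$ the number of solutions is the Fourier coefficient of $|g|^{2t}$ at frequency $(h_1,\dots,h_{k-1},h_k)$, hence at most $\int_{[0,1]^k}|g|^{2t}=J_{t,k}(M)$ by positivity of the integrand. As each $h_j$ ranges over $O(M^j)$ values, there are $O(M^{k(k-1)/2})$ admissible tuples, and therefore
\[
\int_0^1 |S_M(\alpha)|^{2t}\,d\alpha \ll M^{k(k-1)/2}\,J_{t,k}(M)\ll M^{k(k-1)/2}\,M^{2t-k(k+1)/2}=M^{2t-k},
\]
where the middle inequality is Vinogradov's mean value theorem in its supercritical form and the final identity is the arithmetic $\tfrac{k(k-1)}{2}-\tfrac{k(k+1)}{2}=-k$. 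The classical form of that theorem supplies the clean power saving once $t$ exceeds the critical exponent by a fixed margin, and the number of variables required to reach this regime is exactly $O(k^2\log k)$, which furnishes the stated $s_0(k)$.

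The main obstacle is the quantitative input: the passage from the single degree-$k$ equation to the full system and the counting of tuples $\mathbf h$ are elementary bookkeeping, but the exponent $2t-k(k+1)/2$ together with the threshold $s_0(k)=O(k^2\log k)$ is precisely the substance of Vinogradov's mean value theorem, and it is the convergence of the underlying recursion—equivalently, the number of variables needed to reach the supercritical range with no $M^\varepsilon$ loss—that governs $s_0(k)$. The only remaining technical point is to ensure the bound on $J_{t,k}(M)$ is genuinely free of $\varepsilon$-losses, which is automatic once $t$ surpasses the critical exponent $k(k+1)/2$ by a fixed amount, a condition our choice of $s_0(k)=O(k^2\log k)$ comfortably meets.
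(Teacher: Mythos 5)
The paper itself does not prove this lemma; it is imported wholesale as a citation to Proposition 3.3 of \cite{NL}, with a remark that Wooley's work \cite{W} permits $s_0(k)=2k(k+1)$. Your argument reconstructs what lies behind that citation, and its skeleton is the standard (and almost certainly the cited) route: reduce to even moments via the trivial bound $|S_M|\leq M$; use orthogonality to turn $\int_0^1|S_M|^{2t}$ into the number of solutions of $\sum_i f(x_i)=\sum_i f(y_i)$; observe that fixing $h_j=\sum_i(x_i^j-y_i^j)$ for $1\leq j\leq k-1$ pins $h_k$ because $c_k\neq 0$; bound each fiber by $J_{t,k}(M)$ using positivity of $|g|^{2t}$; and multiply by the $O(M^{k(k-1)/2})$ admissible tuples to get $\int_0^1|S_M|^{2t}\ll M^{k(k-1)/2}J_{t,k}(M)$. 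All of these steps are correct, the bookkeeping $\tfrac{k(k-1)}{2}-\tfrac{k(k+1)}{2}=-k$ is right, and the reduction is exactly the mechanism that makes the threshold $O(k^2\log k)$ (classically) and $2k(k+1)$ (via \cite{W}) appear, consistent with the paper's remark.

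The one point you must repair is the justification of the $\varepsilon$-free input $J_{t,k}(M)\ll M^{2t-k(k+1)/2}$. Your claim that this is ``automatic once $t$ surpasses the critical exponent $k(k+1)/2$ by a fixed amount'' is not what the classical theorem gives: in its classical form (Vinogradov--Karatsuba--Stechkin; see Chapter 7 of \cite{Vaughan}) the bound reads $J_{t,k}(M)\ll M^{2t-k(k+1)/2+\eta_{t,k}}$ with $\eta_{t,k}\approx \tfrac{1}{2}k^2(1-1/k)^{\lfloor t/k\rfloor}$, which is strictly positive for \emph{every} finite $t$, so no fixed margin above the critical exponent removes it. The clean bound you need is a consequence of the asymptotic formula $J_{t,k}(M)\sim C(t,k)\,M^{2t-k(k+1)/2}$, which classically is established by a circle-method argument on the Vinogradov system and holds only once $t\gg k^2\log k$ --- this requirement, not the critical exponent itself, is precisely why $s_0(k)$ is of size $k^2\log k$ rather than $k^2$. (Alternatively, one can keep the $\eta$-loss version of the mean value theorem and run a major/minor arc epsilon-removal argument directly on the single-equation count, again at the cost of $O(k^2\log k)$ variables.) Since your stated threshold is $O(k^2\log k)$ in any case, your conclusion is unaffected; but as written the final paragraph conflates the classical regime with the modern Wooley/Bourgain--Demeter--Guth one, and the sentence asserting automatic epsilon-removal just above the critical exponent should be replaced by an appeal to the asymptotic formula for $J_{t,k}$ in the classical range.
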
 
For a proof of this estimate see Proposition 3.3 in \cite{NL}. We remark that it follows from recent work of Trevor Wooley \cite{W}, that one can in fact take $s_0(k)=2k(k+1)$.

\begin{definition} Fixing $s= \max \{s_0(k), 4k \}$ we let $B=s(A+1)+8$ and define $\mathfrak{M}$, the \textbf{major arcs}, by  
\begin{equation}
\mathfrak{M}=\bigcup_{q=1}^{\log^BN} \bigcup_{\substack{0 \leq a < q\\ (a,q)=1}}\mathbf{M}_{a/q},
\end{equation}
where
\begin{equation}
\mathbf{M}_{a/q}=\left\{\alpha \in [0,1] \,:\,\Big|\alpha - \frac{a}{q}\Big|< \frac{\log^BN}{N} \right\},
\end{equation} 
and $\mathfrak{m}$, the \textbf{minor arcs}, by $\mathfrak{m}=[0,1]\setminus \mathfrak{M}.$
\end{definition}

\subsubsection{Estimates on the minor arcs}
We first focus our attention on the minor arcs, in an attempt to absorb their contribution to the integral into the error term in Theorem \ref{T2}. To this end, we invoke the minor arc estimate on $\widehat{\Lambda_N}$ that was Vinogradov's main achievement in solving the ternary Goldbach problem unconditionally. 

\begin{lemma}[Vinogradov, Vaughan] \label{Vmin} If $\alpha \in [0,1]$, $|\alpha-a/q|<1/q^2$ and $(a,q)=1$, then 
\begin{equation}\widehat{\Lambda_N}(\alpha) = O\left(\log^4 N\left(N/q^{1/2}+ N^{4/5} + N^{1/2}q^{1/2}\right)\right).
\end{equation}
\end{lemma}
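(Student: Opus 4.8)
The plan is to prove this via \emph{Vaughan's identity}, the standard device for extracting cancellation in exponential sums over primes. Writing $e(\theta)=e^{2\pi i\theta}$ and letting $\|\cdot\|$ denote distance to the nearest integer, I first note that the contribution of proper prime powers $p^k$ with $k\geq 2$ to $\widehat{\Lambda_N}(\alpha)$ is trivially $O(\sqrt{N}\log N)$, so it suffices to bound $\sum_{n\leq N}\Lambda(n)\,e(-n\alpha)$. Vaughan's identity, applied with parameters $U,V$ to be chosen, expresses $\Lambda(n)$ for $n>U$ as a combination of a divisor-type term $\sum_{d\mid n,\,d\leq U}\mu(d)\log(n/d)$, a term $\sum_{dc\mid n,\,d\leq U,\,c\leq V}\mu(d)\Lambda(c)$, and a genuinely bilinear term supported on $n=mk$ with $m>U$, $k>V$. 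Substituting into the exponential sum and reorganizing yields two \emph{Type I} (linear) sums and one \emph{Type II} (bilinear) sum.

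The Type I sums each take the form $\sum_{d\leq D}a_d\sum_{m\leq N/d}e(-dm\alpha)$ with $|a_d|\ll\log N$ and $D\leq UV$. Summing the inner geometric series gives $\sum_{m\leq N/d}e(-dm\alpha)\ll\min(N/d,\,\|d\alpha\|^{-1})$, so the whole contribution is governed by the classical estimate
\[
\sum_{d\leq D}\min\!\left(\frac{N}{d},\ \frac{1}{\|d\alpha\|}\right)\ll\left(\frac{N}{q}+D+q\right)\log(2qN),
\]
which follows from the hypothesis $|\alpha-a/q|<1/q^2$, $(a,q)=1$, by partitioning the range of $d$ into blocks of length $q$ on which $\{d\alpha\}$ is suitably distributed. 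With the choice $U=V=N^{2/5}$ this contributes $O\big((N/q+N^{4/5}+q)\log^3 N\big)$, which is comfortably within the claimed bound.

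The heart of the matter, and the main obstacle, is the Type II sum $\sum_{m}\sum_{k}\,b_m c_k\,e(-mk\alpha)$, ranging over $m>U$, $k>V$, $mk\leq N$, where the coefficients satisfy $|b_m|\leq d(m)$ and $|c_k|\leq\log k$. Here there is no single smooth variable available, so I would split the $m$-range dyadically into intervals $M<m\leq 2M$ and apply Cauchy--Schwarz in $m$ to eliminate the arithmetic weight $b_m$, using the divisor-moment bound $\sum_{m\leq N}|b_m|^2\ll N\log^3 N$. Expanding the square in the $k$-variable and summing the geometric progression in $m$ then reduces the problem to controlling $\sum_{|h|\leq N/V}\big(\sum_{k}|c_{k}c_{k+h}|\big)\min\!\big(M,\,\|h\alpha\|^{-1}\big)$, which after a change of variables is again handled by the $\min$-lemma above. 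The arithmetic savings inherent in the bilinear structure is exactly what produces the square-root gain in $q$; carrying out the Cauchy--Schwarz and the ensuing $\min$-sum with $U=V=N^{2/5}$ yields a Type II bound of the shape $O\big((Nq^{-1/2}+N^{4/5}+N^{1/2}q^{1/2})\log^{4}N\big)$. Combining the Type I and Type II contributions and absorbing the proper-prime-power term gives the stated estimate, the logarithmic losses throughout being accommodated by the factor $\log^4 N$ on the right-hand side.
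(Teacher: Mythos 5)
The paper offers no proof of this lemma at all: it defers to the exposition in \cite{Vaughan}, and the argument given there is precisely the one you outline --- Vaughan's identity with $U=V=N^{2/5}$, Type I sums controlled by the estimate $\sum_{d\leq D}\min\bigl(N/d,\|d\alpha\|^{-1}\bigr)\ll(N/q+D+q)\log(2qN)$, and the Type II bilinear sum handled by dyadic decomposition, Cauchy--Schwarz, and a second application of the same min-lemma. Your sketch is therefore correct and coincides with the paper's (cited) proof, down to the choice of parameters and the bookkeeping of the powers of $\log N$.
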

The argument for this estimate was greatly simplified by Vaughan, and an exposition of it can be found in \cite{Vaughan}, for example.
It is now a relatively straightforward matter to see that  Lemmas \ref{Wsum} and \ref{Vmin} combine to give the following desired estimate for our integral over the minor arcs.

\begin{corollary}[Minor arc estimate] \label{min} 
\begin{equation}
\int_{\mathfrak{m}}  |\widehat{\Lambda_N}(\alpha)|^2 S_M(\alpha)\,d\alpha = O\left(\frac{N^{(k+1)/k}}{\log^A N}\right).
\end{equation}
\end{corollary}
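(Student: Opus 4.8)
The plan is to derive Corollary \ref{min} from a uniform pointwise bound on $\widehat{\Lambda_N}$ over $\mathfrak{m}$ (supplied by Lemma \ref{Vmin}), interpolated against the high-moment bound for $S_M$ from Lemma \ref{Wsum} and the elementary mean-square estimate
\[ \int_0^1 |\widehat{\Lambda_N}(\alpha)|^2\,d\alpha = \sum_{n \in \Z} \Lambda_N(n)^2 = O(N\log N), \]
the last equality being immediate from Chebyshev's bound $\sum_{p \leq N} \log^2 p = O(N\log N)$ (prime powers contributing lower order).

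First I would obtain the sup bound. By Dirichlet's theorem with $Q = N/\log^B N$, every $\alpha \in [0,1]$ has a reduced fraction $a/q$ with $1 \leq q \leq Q$ and $|\alpha - a/q| \leq 1/(qQ) \leq 1/q^2$. If $\alpha \in \mathfrak{m}$, then $q > \log^B N$; otherwise $|\alpha - a/q| \leq 1/Q = \log^B N/N$ would place $\alpha$ in $\mathbf{M}_{a/q} \subseteq \mathfrak{M}$ (the cases $a=0$ and $\alpha$ near $1$ being handled by viewing everything on $\R/\Z$). Hence for $\alpha \in \mathfrak{m}$ we have $\log^B N < q \leq N/\log^B N$, so each of $N/q^{1/2}$, $N^{4/5}$, and $N^{1/2}q^{1/2}$ in Lemma \ref{Vmin} is $O(N/\log^{B/2} N)$, and therefore
\[ U := \sup_{\alpha \in \mathfrak{m}} |\widehat{\Lambda_N}(\alpha)| = O\!\left(\frac{N}{\log^{B/2 - 4} N}\right). \]

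Next I would interpolate using H\"older's inequality with exponents $s$ and $s' = s/(s-1)$, isolating the single factor of $S_M$ against $|\widehat{\Lambda_N}|^2$:
\[ \int_{\mathfrak{m}} |\widehat{\Lambda_N}|^2 |S_M|\,d\alpha \leq \Big(\int_0^1 |S_M|^s\Big)^{1/s}\Big(\int_{\mathfrak{m}} |\widehat{\Lambda_N}|^{2s'}\Big)^{1/s'}. \]
The first factor is $O(M^{(s-k)/s}) = O(M^{1 - k/s})$ by Lemma \ref{Wsum} (valid since $s \geq s_0(k)$). For the second, I would bound $|\widehat{\Lambda_N}|^{2s'} \leq U^{2s' - 2}|\widehat{\Lambda_N}|^2$ and apply the mean-square estimate, obtaining $(U^{2s'-2} O(N\log N))^{1/s'} = U^{2/s}(N\log N)^{(s-1)/s}$ after simplifying $2 - 2/s' = 2/s$. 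Substituting $M \asymp N^{1/k}$ and the bound for $U$, the powers of $N$ add to $1/k - 1/s + 2/s + (s-1)/s = (k+1)/k$, reproducing exactly the main-term order, while the surviving power of $\log N$ is $\log^{(s + 7 - B)/s} N$; since $B = s(A+1) + 8$ this exponent is at most $-A$, giving the claimed bound $O(N^{(k+1)/k}/\log^A N)$.

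The main obstacle is not any individual estimate but the balancing inherent in the interpolation: the H\"older exponent $s$ must be large enough for Lemma \ref{Wsum} to apply, yet the resulting powers of $M$ and of the $L^2$ norm of $\widehat{\Lambda_N}$ conspire to recreate the full main-term size $N^{(k+1)/k}$, so that the entire gain of $\log^A N$ must be squeezed out of the power of $q$ available on $\mathfrak{m}$. Verifying that the definition $B = s(A+1)+8$ is precisely calibrated to turn this gain into the factor $\log^{-A} N$ is the delicate point, and it is worth double-checking that the $N^{4/5}$ term in Lemma \ref{Vmin} is genuinely dominated (it is, since $\log^{B/2} N \leq N^{1/5}$ for large $N$).
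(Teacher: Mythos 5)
Your proof is correct and follows essentially the same route as the paper's: a Dirichlet/pigeonhole argument forcing $q>\log^B N$ on $\mathfrak{m}$ to get the pointwise bound $O\bigl(N/\log^{(B-8)/2}N\bigr)$ from Lemma \ref{Vmin}, then H\"older with exponents $s$ and $s/(s-1)$ combining Lemma \ref{Wsum} with the mean-square bound $\int_0^1|\widehat{\Lambda_N}|^2\,d\alpha=O(N\log N)$, and the same bookkeeping showing $B=s(A+1)+8$ yields the $\log^{-A}N$ gain. The only (harmless) deviations are that you justify the mean-square bound via Chebyshev rather than the Prime Number Theorem, and that you spell out the domination of the $N^{4/5}$ term, which the paper leaves implicit.
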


\begin{proof} First we fix an arbitrary $\alpha \in \mathfrak{m}$. By the pigeonhole principle, there must exist $1 \leq q \leq N/\log^B N$ such that 
\[\Big|\alpha - \frac{a}{q}\Big| < \frac{\log^B N}{qN} \leq \frac{1}{q^2}.\] 

However, by the definition of $\mathfrak{m}$, it must be the case that $q \geq \log^B N$. Combining these bounds on $q$ with Lemma \ref{Vmin} gives \begin{equation}\label{Vmin2} 
\widehat{\Lambda_N}(\alpha) = O\left(\frac{N}{\log^{(B-8)/2}N}\right). \end{equation}

Now, we invoke H\"older's Inequality in order to utilize our control on the higher moments of $S_M$. Indeed, applying H\"older's inequality, Lemma \ref{Wsum} and (\ref{Vmin2}) we obtain
\begin{align*} \left|\int_{\mathfrak{m}}  |\widehat{\Lambda_N}(\alpha)|^2 S_M(\alpha)\, d\alpha \right| &\leq \left( \int_{\mathfrak{m}} |\widehat{\Lambda_N}(\alpha)|^{2s/(s-1)}\,d\alpha \right)^{(s-1)/s}\left(\int_0^1 |S_M(\alpha)|^s\,d\alpha\right)^{1/s}\\
&= O\left(\left(\frac{N}{\log^{(B-8)/2} N}\right)^{2/s} 
\left( \int_0^1 |\widehat{\Lambda_N}(\alpha)|^{2}\,d\alpha \right)^{(s-1)/s}
M^{\frac{s-k}{s}} \right)\\
&=O\left(\frac{N^{(k+1)/k}}{\log^A N}\right).
\end{align*} 

In the last line above we have used the fact that $M=O(N^{1/k})$ and $B=s(A+1) + 8$, together with the deeper fact that
\[\int_0^1 |\widehat{\Lambda_N}(\alpha)|^{2}\,d\alpha=O(N\log N),\]
which is a standard consequence of the Prime Number Theorem (and Plancherel's Identity). 
\end{proof}

\subsubsection{Estimates on the major arcs}
For the major arcs, we invoke the most classical of estimates.
\begin{lemma}[Major arc estimates] \label{maj} If $\alpha = a/q+ \beta$ with $1 \leq q \leq \log^B N$, $(a,q)=1$ and $|\beta| < \log^B N/N$, then 
\begin{equation} \label{SM} 
S_M(\alpha)= q^{-1} S_{a/q} I_M(\beta) + O(\log^{2B} N)
\end{equation} 
and  
\begin{equation}\label{Lh}
\widehat{\Lambda_N}(\alpha) = \frac{\mu(q)}{\phi(q)} \nu_N(\beta) + O(Ne^{-c\sqrt{\log N}})\end{equation} 
for some $c=c(B)>0$, where $\mu$ is the M\"obius function, $\phi$ is the Euler totient function, 
\[S_{a/q} = \sum_{r=0}^{q-1} e^{2 \pi if(r)a/q},\quad I_M(\beta) = \int_0^M e^{2\pi if(x)\beta}\,dx,\quad\text{and}\quad \nu_N(\beta) = \sum_{n=1}^N e^{-2\pi in\beta}.\]
\end{lemma}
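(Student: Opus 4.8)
The plan is to prove the two asymptotics independently, both exploiting the same structural feature of the major arcs: writing $\alpha = a/q + \beta$ separates each exponential sum into an \emph{arithmetic} factor governed by the frequency $a/q$, which depends only on residues modulo $q$, and a slowly oscillating \emph{analytic} factor governed by $\beta$, which can be compared with its continuous analogue. In both cases I would begin by sorting the summation variable into residue classes modulo $q$.

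For (\ref{SM}), since $f\in\Z[x]$ the quantity $f(d)a/q \bmod 1$ depends only on $d\bmod q$, so writing each $d$ as $d\equiv r\,(q)$ pulls out the factor $e^{2\pi i f(r)a/q}$ and gives
\begin{equation*}
S_M(\alpha) = \sum_{r=0}^{q-1} e^{2\pi i f(r)a/q}\sum_{\substack{1\le d\le M\\ d\equiv r\,(q)}} e^{2\pi i f(d)\beta}.
\end{equation*}
The inner sum is a sampling of the smooth function $x\mapsto e^{2\pi i f(x)\beta}$ along an arithmetic progression of step $q$, so I would compare it to $q^{-1}\int_0^M e^{2\pi i f(x)\beta}\,dx = q^{-1}I_M(\beta)$ by summation by parts (or a bounded-variation estimate). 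The error per residue class is $O(1+V)$, where $V$ is the total variation of the integrand; since its derivative has modulus $2\pi|\beta||f'(x)|$ and $\int_0^M |f'(x)|\,dx = O(M^k) = O(N)$, one gets $V = O(|\beta| N) = O(\log^B N)$ from $|\beta| < \log^B N/N$. Summing the error over the $q \le \log^B N$ classes yields $O(q\log^B N) = O(\log^{2B} N)$, which is exactly (\ref{SM}).

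For (\ref{Lh}) I would first discard the prime powers $p^j$ with $j\ge 2$ (they number $O(\sqrt N)$ and contribute $O(\sqrt N\log N)$, well within the stated error) and the residue classes $r\bmod q$ with $(r,q)>1$ (negligibly many primes). On the reduced classes the factor $e^{-2\pi i p a/q}$ equals $e^{-2\pi i r a/q}$, giving
\begin{equation*}
\widehat{\Lambda_N}(\alpha) = \sum_{\substack{0\le r<q\\ (r,q)=1}} e^{-2\pi i r a/q}\sum_{\substack{n\le N\\ n\equiv r\,(q)}} \Lambda(n)e^{-2\pi i n\beta} + O\!\left(\sqrt N\log N\right),
\end{equation*}
where $\Lambda$ is the von Mangoldt function. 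The inner sum I would evaluate by partial summation against $\psi(t;q,r) = \sum_{n\le t,\,n\equiv r\,(q)}\Lambda(n)$, inserting the Siegel--Walfisz asymptotic $\psi(t;q,r) = t/\phi(q) + O(t e^{-c\sqrt{\log t}})$, valid uniformly for $(r,q)=1$ and $q\le \log^B N$. The main term $t/\phi(q)$ reproduces $\phi(q)^{-1}\nu_N(\beta)$ up to a sum-to-integral discrepancy of size $O(\log^B N)$, while the $\beta$-twist of the error term, handled by a second summation by parts and bounded using $|\beta| N < \log^B N$, contributes $O(N\log^B N\, e^{-c\sqrt{\log N}})$ per class. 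Finally, summing the arithmetic factor, $\sum_{(r,q)=1} e^{-2\pi i r a/q}$ is the Ramanujan sum $c_q(a)$, which equals $\mu(q)$ because $(a,q)=1$; this produces $\frac{\mu(q)}{\phi(q)}\nu_N(\beta)$, and after summing over the $\phi(q)\le\log^B N$ reduced classes and absorbing the polylogarithmic factors into the exponential saving, the total error is $O(Ne^{-c'\sqrt{\log N}})$, as claimed in (\ref{Lh}).

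The genuinely deep ingredient, and the main obstacle, is the Siegel--Walfisz theorem: equidistribution of primes in arithmetic progressions with the saving $e^{-c\sqrt{\log N}}$ uniform over all moduli $q\le\log^B N$. This is precisely why the constant $c$ is permitted to depend on $B$ (and is ineffective). Everything else is routine bookkeeping, the only points requiring care being the sum-to-integral comparisons and the verification that the polynomial factors $\phi(q)$, $|\beta|N$, and the prime-power remainder are all comfortably dominated by the exponential (respectively polylogarithmic) error being asserted.
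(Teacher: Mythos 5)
Your proposal is correct, but note that the paper itself gives no proof of this lemma at all: it simply cites Lemma 4.2 of Davenport for \eqref{SM} and Lemma 3.1 of Vaughan (i.e.\ the Siegel--Walfisz theorem) for \eqref{Lh}, and your argument is precisely a reconstruction of those standard proofs --- residue-class decomposition plus a total-variation comparison with the integral for the Weyl sum, and partial summation against Siegel--Walfisz plus the Ramanujan sum evaluation $c_q(a)=\mu(q)$ for the prime sum. The error bookkeeping ($O(q(1+|\beta|N))=O(\log^{2B}N)$ in the first case, absorption of the polylogarithmic factors into $e^{-c\sqrt{\log N}}$ in the second) checks out, so your write-up correctly fills in exactly what the paper delegates to the references.
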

Estimate (\ref{SM}) can be found in any discussion of Waring's Problem (Lemma 4.2 of \cite{Dav1} for example), while estimate (\ref{Lh}) follows from the Siegel-Walfisz Theorem on primes in arithmetic progressions, and is included in any discussion of Vinogradov's Three Primes Theorem (see for example Lemma 3.1 in \cite{Vaughan}). We further note that
\begin{equation} \label{nu} \nu_N(\beta)=\int_0^N e^{-2\pi ix\beta}dx + O(\log^B N),
\end{equation} 
thus we maintain the same asymptotic by replacing the sum with the integral. 

We are now ready to combine all of our estimates and establish Theorem \ref{T2}.

\subsubsection{Proof of Theorem \ref{T2}} From  (\ref{trans}) and Corollary \ref{min}, and noting that for large $N$ the individual major arcs are pairwise disjoint, we have 
\begin{equation}
R_f(N) = \sum_{q=1}^{\log^B N}\sum_{\substack{0 \leq a < q \\ (a,q)=1}}  \int_{\mathbf{M}_{a/q}}|\widehat{\Lambda_N}(\alpha)|^2  S_M(\alpha)\, d\alpha + O\left(\frac{N^{(k+1)/k}}{\log^A N}\right).
\end{equation}
From (\ref{SM}), (\ref{Lh}), and (\ref{nu}), we then obtain 
\begin{equation}\label{R1} 
R_f(N)= \underbrace{\sum_{q=1}^{\log^B N}\frac{\mu(q)^2}{q\phi(q)^2} \sum_{\substack{0 \leq a < q \\ (a,q)=1}} S_{a/q}}_{(\star)} \underbrace{\int\limits_{|\beta|<(\log^B N)/N}\left|\int_0^N e^{-2\pi iy\beta}\,dy\right|^2  I_M(\beta)\, d\beta}_{(\star\star)}+O\left(\frac{N^{(k+1)/k}}{\log^A N}\right). 
\end{equation}

The reader can now easily see that Theorem \ref{T2} will be an immediate consequence of the following estimates for $(\star)$ and $(\star\star)$ respectively:
\begin{equation}\label{ss}
\sum_{q=1}^{\log^B N}\frac{\mu(q)^2}{q\phi(q)^2} \sum_{\substack{0 \leq a < q \\ (a,q)=1}} S_{a/q}=\sum_{q=1}^{\infty} \frac{\mu(q)^2}{q\phi(q)^2}\sum_{\substack{0 \leq a < q \\ (a,q)=1}} S_{a/q}+ O\left(\frac{1}{\log^{A} N}\right)
\end{equation}
and
\begin{equation}\label{si}
\int\limits_{|\beta|<(\log^B N)/N}\left|\int_0^N e^{-2\pi iy\beta}\,dy\right|^2  I_M(\beta)\, d\beta=\frac{k}{k+1}MN+O\left(\frac{MN}{\log^BN}  \right).
\end{equation}

In order to establish (\ref{ss}) we need an estimate on the magnitude of the Gauss sum $S_{a/q}$ that beats the trivial bound of $q$. 
Theorem 7.1 in \cite{Vaughan} tells us that 
\begin{equation}\label{gsum}S_{a/q} = O_{\epsilon}(q^{(1-1/k)+\epsilon}) \text{ for all } \epsilon > 0, 
\end{equation}
so in particular, we know that $S_{a/q} = O(q^{1-1/2k})$, and since $\phi(q) \geq Cq^{1-\frac{1}{4k}}$ (trivially), we can deduce from this that
\begin{align*}
\Big|\sum_{q=\log^B N}^{\infty}\frac{\mu(q)^2}{q\phi(q)^2} \sum_{\substack{0 \leq a < q \\ (a,q)=1}} S_{a/q}\Big| &= O\left(\sum_{q=\log^B N}^{\infty} \frac{1}{q^{1+1/4k}}\right)\\
& = O\left(\frac{1}{\log^{B/4k}N}\right).
\end{align*}
Noting that $B/4k> A$, we see that (\ref{ss}) immediately follows.


In order to establish (\ref{si}) we initially note that 
\begin{equation} \label{IM1} 
I_M(\beta) - \int_0^M e^{2\pi ic_kx^k\beta}\,dx  = O\left(\int_0^Mx^{k-1}\beta\,dx\right)=O(\log^B N)\end{equation}
and that substituting this into the left hand side of (\ref{si}) gives
\begin{equation*}
\int\limits_{|\beta|<(\log^B N)/N}\!\!\left|\int_0^N e^{-2\pi iy\beta}\,dy\right|^2  I_M(\beta)\, d\beta=\!\!\!\!\!\!\!\!\!\!\!\!\int\limits_{|\beta|<(\log^B N)/N}\!\!\left|\int_0^N e^{-2\pi iy\beta}\,dy\right|^2  \left(\int_0^M e^{2\pi ic_kx^k\beta}dx\right)\, d\beta+O(N\log^{2B} N).
\end{equation*}

We note further that after three changes of variables (namely $x:= x/M$, $\beta := N\beta$, $y := y/N$), the integral on the right hand side of the above identity can be seen to satisfy
\begin{equation} \label{SI1} 
\int\limits_{|\beta|<\log^B N}\!\!\left|\int_0^N e^{-2\pi iy\beta}\,dy\right|^2  \left(\int_0^M e^{2\pi ic_kx^k\beta}dx\right)\, d\beta= MN\!\!\!\!\!\!\!\!\!\int\limits_{|\beta|<\log^B N}\!\!\!\!\!\!|\widehat{1_{[0,1]}}(\beta)|^2\,\left(\int_0^1 e^{2\pi ix^k\beta}dx\right)\, d\beta, \end{equation}\\ where $\widehat{1_{[0,1]}}(\beta) = \int_0^1 e^{-2 \pi iy\beta}\, dy$ is the usual Euclidean Fourier transform of the characteristic function of the unit interval. Utilizing the standard (and easily verified) fact that $|\widehat{1_{[0,1]}}(\beta)|^2 = O(1+|\beta|^2)^{-1}$, it then follows that
 \begin{equation}
 \int\limits_{|\beta|\geq\log^B N}\!\!\!\!\!\!|\widehat{1_{[0,1]}}(\beta)|^2\,\left(\int_0^1 e^{2\pi ix^k\beta}dx\right)\, d\beta = O\left(\frac{1}{\log^{B}N}\right)
 \end{equation}
and hence that
 \begin{equation}
\int\limits_{|\beta|<\log^B N}\!\!\!\!\!\!|\widehat{1_{[0,1]}}(\beta)|^2\,\left(\int_0^1 e^{2\pi ix^k\beta}dx\right)\, d\beta=\int_{\R}|\widehat{1_{[0,1]}}(\beta)|^2\,\left(\int_0^1 e^{2\pi ix^k\beta}dx\right)\, d\beta + O\left(\frac{1}{\log^{B}N}\right).
 \end{equation}

Thus, in order to establish (\ref{si}) it finally suffices to show
\begin{equation}
\int_{\R}|\widehat{1_{[0,1]}}(\beta)|^2\,\left(\int_0^1 e^{2\pi ix^k\beta}dx\right)\, d\beta=\frac{k}{k+1}.
\end{equation}

Changing variables again ($x := x^k$), letting $g(x)=x^{(1-k)/k}/k\,1_{[0,1]} (x)$, and applying Parseval's Identity, it follows that
\begin{align*}
\int_{\R}|\widehat{1_{[0,1]}}(\beta)|^2\,\left(\int_0^1 e^{2\pi ix^k\beta}dx\right)\, d\beta&=\int_{\R}|\widehat{1_{[0,1]}}(\beta)|^2\,\overline{\widehat{g}(\beta)}\,d\beta\\
&=\int_{\R}1_{[0,1]}(x)\int_{\R}g(y)1_{[0,1]}(x-y)\,dy\,dx\\
&=\int_0^1 \int_0^x \frac{y^{(1-k)/k}}{k} \,dy\, dx \\
&=\frac{k}{k+1}
\end{align*}
as required.\qed



\section{Proof of Lemmas \ref{SSLem} and \ref{Wlem}}\label{4}

\subsection{Proof of Lemma \ref{SSLem}}\label{SectionSSlem} First we write 
\begin{equation}
\sum_{q=1}^{\infty} \frac{\mu(q)^2}{q\phi(q)^2}\sum_{\substack{0 \leq a < q \\ (a,q)=1}} \sum_{r=0}^{q-1} e^{2\pi \text{i} f(r) a/q} = \sum_{q=1}^{\infty}F(q),
\end{equation}
 where 
 \begin{equation}
F(q)= \frac{\mu(q)^2}{q\phi(q)^2} \sum_{\substack{0 \leq a < q \\ (a,q)=1}} S_{a/q}\quad\text{and}\quad S_{a/q} = \sum_{r=0}^{q-1} e^{2 \pi if(r)a/q}.
\end{equation}

Since $q$, $\mu(q)$, and $\phi(q)$ are all multiplicative functions, and it is a standard exercise to verify that
\begin{equation}
\sum_{\substack{0 \leq a < q \\ (a,q)=1}} S_{a/q}
\end{equation}
is also multiplicative (see for example Lemma 5.1 of \cite{Dav1} for a proof), it follows that $F(q)$ is a multiplicative function.
Furthermore, during the proof of Theorem \ref{T2}, we used estimate (\ref{gsum}) to establish that $F(q) = O(q^{-1-1/4k})$, so in particular we know that
$\sum_{q=1}^{\infty}F(q)$
is an absolutely convergent sum. 

These two facts combine to yield the usual Euler product formula, namely
\begin{equation}
\sum_{q=1}^{\infty}F(q)= \prod_{p \in \P} \sum_{n=0}^{\infty} F(p^n)= \prod_{p \in \P}(1+F(p)),
\end{equation}
with the great simplification occurring due the presence of the M\"obius function, which forces $F(p^n) = 0$ for every prime $p$, provided $n \geq 2$. 

Noting that 
\begin{equation}
F(p)=\frac{1}{p(p-1)^2} \sum_{a=1}^{p-1} \sum_{r=o}^{p-1} e^{2 \pi i f(r)a/p},
\end{equation}
we see that it now suffices to simply verify that
\begin{equation} \label{z} \sum_{r=0}^{p-1} \sum_{a=1}^{p-1} e^{2 \pi i f(r)a/p}=p(z_f(p)-1)
\end{equation}
for all primes $p$. This is straightforward, since
if $f(r) \equiv 0$ modulo $p$, then the exponential term is identically $1$, and \[\sum_{a=1}^{p-1} e^{2 \pi i f(r)a/p}=p-1.\] 
While if $f(r) \not\equiv 0$ modulo $p$, then the inner sum goes over all $p$-th roots of unity except $1$, hence \[\sum_{a=1}^{p-1} e^{2 \pi i f(r)a/p}=-1.\] Therefore, since $z_f(p)$ denotes the number of roots of $f$ mod $p$, it follows that
\begin{equation}
\sum_{r=0}^{p-1} \sum_{a=1}^{p-1} e^{2 \pi i f(r)a/p}=z_f(p)(p-1)-(p-z_f(p)) = p(z_f(p)-1)
\end{equation}
as required.
\qed


\subsection{Proof of Lemma \ref{Wlem}}\label{SectionWlem} 

First, we let
\begin{equation}
M'= \min \{n \in \N \mid f(d)>N \ \text{for all} \ d >n\},
\end{equation}
so
\begin{equation} 
 \label{a1} \sum_{d=1}^{M'}\sum_{n \in \Z} 1_{\mathcal{P}_N}(n)1_{\mathcal{P}_N}(n-f(d)) = \# \{(p,d) \in \mathcal{P}_N \times \N \,:\, p-f(d) \in \mathcal{P}_N \} = r_f(N) +O\left(\frac{N}{\log N}\right),
 \end{equation}
 where the error term accounts for the finitely many instances of ``double-counting" in the polynomial. This convenience is the primary reason for restricting to the image of $\N$ as opposed to $\Z$, making all nonconstant polynomials ``eventually injective". We also note that $M'=M + O_{\epsilon}(N^{\epsilon})$ for every $\epsilon>0$, and in particular,
\begin{equation} \label{a2}
r_f(N) =  \sum_{d=1}^{M}\sum_{n \in \Z} 1_{\mathcal{P}_N}(n)1_{\mathcal{P}_N}(n-f(d)) + O\left(N^{1+1/4k} \right).
\end{equation}
We now make the usual observation that the contribution to the von Mangoldt function from proper prime powers is negligible. Namely,
\begin{equation} \label{a3}
R_f(N)= \sum_{d=1}^M\sum_{n \in \Z} \log(n)\log(n-f(d))1_{\mathcal{P}_N}(n)1_{\mathcal{P}_N}(n-f(d)) + O\left(N^{(k+2)/2k}\log^2 N\right).
\end{equation}
By (\ref{a1}), (\ref{a2}), and (\ref{a3}), noting the trivial upper bound 
\begin{equation}
\sum_{d=1}^{M}\sum_{n \in \Z} 1_{\mathcal{P}_N}(n)1_{\mathcal{P}_N}(n-f(d)) \geq \frac{1}{\log^2 N}\sum_{d=1}^M\sum_{n \in \Z} \log(n)\log(n-f(d))1_{\mathcal{P}_N}(n)1_{\mathcal{P}_N}(n-f(d)),
\end{equation}
and changing variables $(n:=n-f(d))$, it now suffices to show
\begin{equation}
\begin{split}
\sum_{d=1}^{M}\sum_{n \in \Z} 1_{\mathcal{P}_N}(n)1_{\mathcal{P}_N}(n+f(d)) &\leq \frac{1}{\log^2 N}\sum_{d=1}^M\sum_{n \in \Z} \log(n)\log(n+f(d))1_{\mathcal{P}_N}(n)1_{\mathcal{P}_N}(n+f(d))\\ &+ O\left(\frac{N^{(k+1)/k}\log \log N}{\log^3N} \right). 
\end{split}
\end{equation}
Toward this end, we see that for any $\delta>0$,
\begin{align*} 
\sum_{d=1}^{M}\sum_{n \in \Z} 1_{\mathcal{P}_N}(n)1_{\mathcal{P}_N}(n+f(d)) &= \sum_{d=1}^M \sum_{n=N^{1-\delta}}^N 1_{\mathcal{P}_N}(n)1_{\mathcal{P}_N}(n+f(d)) +  \sum_{d=1}^M\sum_{n=1}^{N^{1-\delta}} 1_{\mathcal{P}_N}(n)1_{\mathcal{P}_N}(n+f(d))\\
& \leq \frac{1}{(1-\delta)^2\log^2 N}  \sum_{d=1}^M\sum_{n\in \Z} \log(n)\log(n+f(d))1_{\mathcal{P}_N}(n)1_{\mathcal{P}_N}(n+f(d)) \\
& +O\left(\frac{N^{(k+1)/k}}{N^{\delta}\log N} \right).
\end{align*}
Setting \[\delta=\frac{2\log\log N}{\log N},\] and noting from Theorem \ref{T2} and (\ref{a3}) that
\begin{equation}
 \sum_{d=1}^M\sum_{n\in \Z} \log(n)\log(n+f(d))1_{\mathcal{P}_N}(n)1_{\mathcal{P}_N}(n+f(d)) = O\left( N^{(k+1)/k} \right),
\end{equation} 
the lemma follows. \qed


\section{Further Generalization}\label{finalsection}

A natural question: How would our results change if, instead of differences of primes, we counted sums of primes that lie in the image of a fixed polynomial? Even more generally, for $f \in \Z[x]$, $N \in \N$, and arbitrary integers $a_1,a_2 \neq 0$, we define
\begin{equation} r_{f,a_1,a_2}(N)=\# \left\{ (p_1,p_2) \in \P_N^2 \,:\,a_1p_1+a_2p_2 \in f(\N) \right\}. \end{equation}

Again we note that $r_{f,a_1,a_2}(N)=r_{-f,-a_1,-a_2}(N)$, so we can assume that $f$ has positive leading coefficient. Under this assumption, if $a_1$ and $a_2$ were both negative, one can see that $r_{f,a_1,a_2}$ would be uniformly bounded in $N$, so we assume without loss of generality that $a_1>0$. 

Careful adaptation of our initial heuristic with the discrete derivative yields an additional factor of
\begin{equation} C_k(a_1,a_2) =\begin{cases} \dfrac{(a_1+a_2)^{1/k}-a_1^{1/k}}{a_2} +\dfrac{(a_1+a_2)^{1/k}-a_2^{1/k}}{a_1} &\text{if }a_2>0\\ \dfrac{(a_1+a_2)^{1/k}-a_1^{1/k}}{a_2}+\dfrac{(a_1+a_2)^{1/k}}{a_1} &\text{if }0>a_2\geq -a_1\\ -\dfrac{a_1^{1/k}}{a_2} &\text{if } a_2 \leq -a_1 \end{cases}.\end{equation}
Note for example that $C_k(1,-1) = 1$ for all $k$, as it should to agree with our previous discussion, and $C_k(1,1)= 2(2^{1/k}-1)$. 

To adapt our heuristic for local congruence biases, it is useful to partition the primes based on how many of the coefficients $a_1,a_2$ they divide. Namely, we define
\begin{align} \P_0&= \{ p \in \P : p \nmid a_1a_2\}\nonumber\\
\P_1&= \{ p \in \P : p \mid a_1a_2,\text{ } (p \nmid a_1 \text{ or } p \nmid a_2) \}\\ \P_2&= \{p \in \P : p \mid a_1, \text{ } p \mid a_2 \}\nonumber. \end{align}
Hopefully context will prevent any confusion between this notation and the notation $\P_N=\P \cap \{1,\dots,N\}$. 

Adaptation of our heuristic with the Siegel-Walfisz Theorem yields local bias factors
\begin{equation} b_{f,a_1,a_2}(p)= \begin{cases}1+ \dfrac{z_f(p)-1}{(p-1)^2}&\text{if }p \in \P_0\\ \dfrac{p-z_f(p)}{p-1}&\text{if }p \in \P_1\\z_f(p)  &\text{if } p \in \P_2 \end{cases}.\end{equation}
Note that we still have $b_{{x^k},a_1,a_2}(p)=1$ for all $p \in \P$, $k \in \N$. 

Indeed, one can establish the following generalization of Theorem \ref{T1}.

\begin{theorem} \label{T3} If $a_1,a_2 \in \Z$, $a_1 >0$, $a_2 \neq 0$, $f(x) = c_kx^k +\cdots+c_1x +c_0 \in \Z[x]$, $c_k>0$, and $k \geq 1$, then 
\begin{equation}
\begin{split}
r_{f,a_1,a_2}(N) = &\prod_{p \in \P_0} \left(1+\frac{z_f(p)-1}{(p-1)^2}\right) \prod_{p \in \P_1}\frac{p-z_f(p)}{p-1} \prod_{p \in \P_2} z_f(p)\,\frac{C_k(a_1,a_2)}{c_k^{1/k}}\frac{k}{k+1} \frac{N^{(k+1)/k}}{\log^2 N}\\ &+ O\left(\frac{N^{(k+1)/k}\log \log N}{\log^3 N}\right),\
\end{split}
\end{equation}
where the implied constant depends on $f, a_1,$ and $a_2$.
\end{theorem}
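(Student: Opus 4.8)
The plan is to run the Hardy--Littlewood circle method exactly as in the proof of Theorem~\ref{T2} to obtain the corresponding weighted asymptotic, and then remove the logarithmic weights by a verbatim adaptation of Lemma~\ref{Wlem} (which supplies the $O(N^{(k+1)/k}\log\log N/\log^3 N)$ error). First I would introduce the weighted count
\begin{equation*}
R_{f,a_1,a_2}(N)=\sum_{d=1}^{L}\ \sum_{\substack{n_1,n_2\in\Z\\ a_1n_1+a_2n_2=f(d)}}\Lambda_N(n_1)\Lambda_N(n_2),
\end{equation*}
where $d$ is truncated at the largest index $L$ for which $f(d)$ does not exceed the maximal representable value $\max\{a_1p_1+a_2p_2:p_1,p_2\leq N\}$; this maximal value, and hence $L$, depends on the signs and relative sizes of $a_1,a_2$, and is ultimately the source of the three cases defining $C_k(a_1,a_2)$. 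By orthogonality the count becomes
\begin{equation*}
R_{f,a_1,a_2}(N)=\int_0^1 \widehat{\Lambda_N}(-a_1\alpha)\,\widehat{\Lambda_N}(-a_2\alpha)\,\overline{S_L(\alpha)}\,d\alpha,
\end{equation*}
to which I would apply the same major/minor arc decomposition, with $S_L$ in place of $S_M$.

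The minor arc bound of Corollary~\ref{min} adapts with essentially no change. A minor-arc point $\alpha$ stays bounded away from every rational with small denominator, and since $a_1,a_2$ are fixed the same holds for $a_1\alpha$ and $a_2\alpha$ (if $a_i\alpha$ were near $b/r$ with $r$ small then $\alpha$ would be near $b/(a_ir)$, a contradiction); hence Lemma~\ref{Vmin} bounds $\widehat{\Lambda_N}(-a_i\alpha)$, and a Hölder estimate against the high-moment bound of Lemma~\ref{Wsum} for $S_L$, together with the dilation-invariant $L^2$ bound $\int_0^1|\widehat{\Lambda_N}(-a_i\alpha)|^2\,d\alpha=O(N\log N)$, absorbs the minor arcs into the error term.

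The new content lives on the major arcs. Near $a/q$ with $(a,q)=1$ I would write $a_i\alpha=a_ia/q+a_i\beta$ and reduce $a_ia/q$ to lowest terms, whose denominator is $q_i:=q/\gcd(a_i,q)$ since $(a,q)=1$; then \eqref{Lh} gives $\widehat{\Lambda_N}(-a_i\alpha)\approx\frac{\mu(q_i)}{\phi(q_i)}\nu_N(-a_i\beta)$ and \eqref{SM} controls $S_L$, so separating the arithmetic and analytic parts produces the singular series
\begin{equation*}
\mathfrak{S}_{a_1,a_2}(f)=\sum_{q=1}^\infty\frac{1}{q}\,\frac{\mu(q_1)\mu(q_2)}{\phi(q_1)\phi(q_2)}\sum_{\substack{0\leq a<q\\(a,q)=1}}\overline{S_{a/q}},\qquad q_i=\frac{q}{\gcd(a_i,q)}.
\end{equation*}
Using $q_i\geq q/|a_i|$ together with the Gauss-sum bound \eqref{gsum} this converges absolutely and factors as an Euler product, and the heart of the argument is the evaluation of each local factor. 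Because $\mu$ forces $q$ squarefree, only $q=1$ and $q=p$ contribute at the prime $p$, and the value of $\gcd(a_i,p)$ splits into exactly the three cases defining $\P_0,\P_1,\P_2$: if $p\nmid a_1a_2$ then $q_1=q_2=p$ and \eqref{z} recovers $1+\frac{z_f(p)-1}{(p-1)^2}$ as in Lemma~\ref{SSLem}; if $p$ divides exactly one of $a_1,a_2$ then one $q_i$ collapses to $1$, replacing a factor $\frac{\mu(p)}{\phi(p)}$ by $1$ and yielding $\frac{p-z_f(p)}{p-1}$; and if $p\mid a_1$ and $p\mid a_2$ then both collapse, yielding $z_f(p)$. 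I expect this bookkeeping --- correctly tracking the collapsing denominators $q_i$ and re-deriving each local factor from \eqref{z} --- to be the main obstacle, as it is where all of the new arithmetic resides.

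The remaining major-arc factor is the singular integral $\int\nu_N(-a_1\beta)\,\nu_N(-a_2\beta)\,\overline{I_L(\beta)}\,d\beta$, which after replacing $\nu_N$ and $I_L$ by their integral forms, extending $\beta$ to $\R$, and rescaling, computes the archimedean density of solutions to $a_1y_1+a_2y_2=c_kx^k$ in the relevant box. Carrying out the Parseval computation from the proof of Theorem~\ref{T2} separately in the three regimes $a_2>0$, $0>a_2\geq-a_1$, and $a_2\leq-a_1$ --- which determine both the admissible range of $f(d)$ and the shape of the segment $a_1y_1+a_2y_2=v$ inside $[0,N]^2$ --- yields precisely $C_k(a_1,a_2)\,\frac{k}{k+1}\,\frac{N^{(k+1)/k}}{c_k^{1/k}}$. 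Assembling the singular series, the singular integral, the minor-arc estimate, and the weight-removal step then gives Theorem~\ref{T3}; as a consistency check, when $a_1=1$, $a_2=-1$ one has $\P_1=\P_2=\emptyset$ and $C_k(1,-1)=1$, recovering Theorem~\ref{T1}.
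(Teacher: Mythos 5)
Your proposal follows the paper's own proof essentially step for step: the same weighted count and transform-side formulation (the paper takes $R_{f,a_1,a_2}=\int_0^1\widehat{\Lambda_N}(a_1\alpha)\widehat{\Lambda_N}(a_2\alpha)S_M(\alpha)\,d\alpha$ with $M=(\max\{a_1,a_1+a_2\}N/c_k)^{1/k}$), the same minor-arc absorption, the same singular integral producing $C_k(a_1,a_2)c_k^{-1/k}\frac{k}{k+1}N^{(k+1)/k}$, and the same weight removal via the analogue of Lemma~\ref{Wlem}. Even your singular series is the paper's $\S(f,a_1,a_2)$ in disguise: for a squarefree modulus $n=qm_1m_2$ with $q\in\N_0$, $m_1\in\N_1$, $m_2\in\N_2$, splitting $m_1=m_1'm_1''$ according to which of $a_1,a_2$ each prime factor divides gives reduced denominators $n_1=qm_1''$ and $n_2=qm_1'$, so your coefficient $\mu(n_1)\mu(n_2)/\phi(n_1)\phi(n_2)$ coincides with the paper's $\mu(q)^2\mu(m_1)/\phi(q)^2\phi(m_1)$.

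There is, however, a genuine gap, and it sits exactly where you predicted the difficulty would be: the claim that ``$\mu$ forces $q$ squarefree, so only $q=1$ and $q=p$ contribute at the prime $p$.'' This is true at primes of $\P_0$ and $\P_1$, but false at primes $p\in\P_2$. If $p$ divides both $a_1$ and $a_2$, then for $q=p^2$ the reduced denominators are $q_1=q_2=p$ (or even $1$, if $p^2\mid a_i$), which are squarefree, so $\mu(q_1)\mu(q_2)\neq 0$ and the modulus $p^2$ survives in your (correct) singular series. When $p$ exactly divides both $a_i$, its contribution to the Euler factor at $p$ is $\bigl(z_f(p^2)-z_f(p)\bigr)/(p-1)^2$, since a Ramanujan-sum computation gives $\sum_{(a,p^2)=1}S_{a/p^2}=p^2\bigl(z_f(p^2)-z_f(p)\bigr)$; this is nonzero whenever $f$ has more roots modulo $p^2$ than modulo $p$. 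Concretely, take $a_1=a_2=2$ and $f(x)=x^2$: then $z_f(2)=1$ but $z_f(4)=2$, and the arcs centred at $1/4$ and $3/4$ contribute $\frac{\mu(2)^2}{\phi(2)^2}\cdot\frac{1}{4}\bigl(S_{1/4}+S_{3/4}\bigr)=1$, so the true local factor at $2$ is $2$ rather than the claimed $z_f(2)=1$. (Sanity check: $2p_1+2p_2=d^2$ forces $d=2e$ and $p_1+p_2=2e^2$, and running Goldbach asymptotics over these targets indeed yields twice what the stated formula predicts.) You should know this flaw is inherited rather than introduced: the paper's own proof asserts that ``only major arcs with squarefree denominators contribute to the main term,'' which fails for the same reason, so Theorem~\ref{T3} itself requires correction when $\P_2\neq\emptyset$. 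Your argument, like the paper's, is complete when $\gcd(a_1,a_2)=1$, and more generally whenever $z_f(p^j)=z_f(p^{j-1})$ for each $p\in\P_2$ and each relevant $j\geq 2$ (for instance when all roots of $f$ modulo such $p$ are simple); otherwise the local factors at $p\in\P_2$ must be recomputed to include the higher prime-power moduli, and they then involve root counts of $f$ modulo powers of $p$, not just modulo $p$.
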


The vast majority of the argument for Theorem \ref{T3} is immediately analogous to the proof of Theorem \ref{T1}, so rather than starting from scratch, we proceed by highlighting the key differences. Much like (\ref{trans}), we start by giving a weighted, transform side formulation of our count, defining \begin{equation} \label{trans2} R_{f,a_1,a_2} = \int_{0}^1 \widehat{\Lambda_N}(a_1\alpha) \widehat{\Lambda_N}(a_2\alpha) S_M(\alpha) d\alpha, \end{equation} where now \begin{equation}
M= \left(\frac{\max \{a_1, a_1+a_2\}N}{c_k}\right)^{1/k}.
\end{equation}

 The procedure for absorbing the minor arcs into the error term goes through just as before, but some subtlety arises in the major arcs. For example, if $\alpha$ is in some major arc $\textbf{M}_{a/q}$, then (\ref{Lh}) still gives an estimate for $\widehat{\Lambda_N}(a_1 \alpha)$, just potentially with a different denominator, based on the factors shared by $a_1$ and the original denominator $q$. More officially, we decompose each squarefree number based on our previous partition of the primes, i.e. we define
\begin{equation} \N_i = \{p_1p_2\cdots p_\ell : p_j \in \P_i \text{ distinct} \}  \end{equation}
for $i=0,1,2$,
where each set includes $1$, so every squarefree number can be written uniquely as $qm_1m_2$ with $q \in \N_0$, $m_1 \in \N_1$, and $m_2 \in \N_2$. (Recall that due to the presence of the M\"obius function, only major arcs with squarefree denominators contribute to the main term.) 
Now, one can check that by (\ref{Lh}), if $\alpha =a/qm_1m_2 + \beta$, $qm_1m_2 \leq \log^BN$, $(a,qm_1m_2)=1$ and $|\beta| < \log^B N/N$, then \begin{equation} \widehat{\Lambda_N}(a_1 \alpha)\widehat{\Lambda_N}(a_2 \alpha) = \frac{\mu(q)^2\mu(m_1)}{\phi(q)^2\phi(m_1)}\nu_N(a_1\beta)\nu_N(a_2\beta) + O(N^2e^{-c\sqrt{\log n}})  \end{equation}
for some $c=c(B,a_1,a_2)>0$.
 
In place of the integral observed in (\ref{SI1}), we now have \begin{equation}\int_{|\beta|<\log^B N/N}\left(\int_0^N e^{-2\pi ia_1y\beta}\,dy\right) \left(\int_0^N e^{-2\pi ia_2z\beta}\,dz \right) \left(\int_0^M e^{2\pi ic_kx^k\beta}\,dx\right) \,d\beta ,\end{equation} 
which with similar tricks of changing variables and applying Parseval's Identity can be seen to equal 
\begin{equation}
C_k(a_1,a_2)c_k^{-1/k}\frac{k}{k+1} N^{(k+1)/k} + O\left(\frac{N^{(k+1)/k}}{\log^B N}\right).
\end{equation}

In place of the singular series $\S(f)$, we now have  \begin{equation} \S(f,a_1,a_2) = \sum_{\substack{ q \in \N_0 \\ m_1 \in \N_1 \\ m_2 \in \N_2}}\frac{\mu(q)^2\mu(m_1)}{\phi(q)^2\phi(m_1)} \frac{1}{qm_1m_2} \sum_{\substack{0 \leq a < qm_1m_2 \\ (a,qm_1m_2)=1}}S_{a/qm_1m_2},  \end{equation} 
which is still an absolutely convergent sum of a multiplicative function that is zero for non-squarefree numbers, hence we still have the Euler product formula \[\S(f,a_1,a_2) = \prod_{p \in \P} (1+F(p)),\] 
where now 
\begin{equation} F(p) = \begin{cases}\displaystyle{\frac{1}{p(p-1)^2}\sum_{a=0}^{p-1}S_{a/p}}&\text{if }p \in \P_0\\
\displaystyle{-\frac{1}{p(p-1)}\sum_{a=0}^{p-1}S_{a/p}}&\text{if }p \in \P_1\\ \displaystyle{\frac{1}{p} \sum_{a=0}^{p-1}S_{a/p}} &\text{if } p \in \P_2 \end{cases}. \end{equation} 
Recalling our previous calculation that \[\sum_{a=0}^{p-1}S_{a/p} = p(z_f(p)-1),\] the result follows.\qed

\begin{rem} Generalization of this result to linear combinations of more than two primes is definitely possible, but not particularly useful, as the circle method machinery is already capable of tackling the strictly harder task of counting solutions to $a_1p_1+\cdots+a_\ell p_\ell = N$ for fixed $N$ and $\ell \geq 3$. A useful pursuit, however, may be to extend the result to count solutions of the form $a_1p_1^{k_1}+a_2p_2^{k_2}=f(d)$, where $k_1$ and $k_2$ are arbitrary fixed natural numbers.
\end{rem}

\end{document}